\numberwithin{equation}{section}
\font\tengothic=eufm10 scaled\magstep 1 \font\sevengothic=eufm7
\newtheorem{theorem}{Theorem}[section]
\newtheorem{proposition}[theorem]{Proposition}
\newtheorem{corollary}[theorem]{Corollary}
\theoremstyle{definition}
\newtheorem{definition}[theorem]{Definition}
\newtheorem{remark}[theorem]{Remark}
\newtheorem{problem}[theorem]{Problem}
\newtheorem{example}[theorem]{Example}
\newtheorem{notation}[theorem]{Notation}
\newtheorem{question}[theorem]{Question}
\newcommand{\codim}{\operatorname{codim}}
\newcommand{\rank}{\operatorname{rank}}
\newcommand{\cW}{{\mathcal W}}
\newcommand{\cK}{{\mathcal K}}
\newcommand{\cE}{{\mathcal E}}
\newcommand{\cU}{{\mathcal U}}
\newcommand{\cF}{{\mathcal F}}
\newcommand{\cO}{{\mathcal O}}
\newcommand{\cD}{{\mathcal D}}
\newcommand {\RR}{\mathbb{R}}
\newcommand {\ZZ}{\mathbb{Z}}
\newcommand {\PP}{\mathbb{P}}
\newcommand {\FF}{\mathbb{F}}
\def\mapright#1{\smash{ \mathop{\longrightarrow}
    \limits^{#1}}}
\begin{document}
\title[ Brill-Noether theory for moduli spaces]
 {Brill-Noether theory for moduli spaces of sheaves on algebraic varieties}

\author[L.\ Costa, R.M.\ Mir\'o-Roig]{L.\ Costa$^*$, R.M.\
Mir\'o-Roig$^{**}$}

\address{Facultat de Matem\`atiques,
Departament d'Algebra i Geometria, Gran Via de les Corts Catalanes
585, 08007 Barcelona, SPAIN } \email{costa@ub.edu, miro@ub.edu}

\date{\today}
\thanks{$^*$ Partially supported by MTM2007-61104.}
\thanks{$^{**}$ Partially supported by MTM2007-61104.}

\subjclass{14F05}

 \keywords{Brill-Noether, moduli spaces, stability, vector bundles}

\begin{abstract}
Let $X$ be a smooth projective variety of dimension $n$  and let
$H$ be an ample line bundle on $X$. Let $M_{X,H}(r;c_1, \cdots,
c_{s})$ be the moduli space of $H$-stable vector bundles $E$ on
$X$ of rank $r$ and  Chern classes $c_i(E)=c_i$ for $i=1, \cdots,
s:=min\{r,n\}$. We define the Brill-Noether filtration on
$M_{X,H}(r;c_1, \cdots, c_{s})$  as $W_{H}^{k}(r;c_1,\cdots, c_{s}
)= \{ E \in M_{X,H}(r;c_1, \cdots, c_{s}) | h^0(X,E) \geq k \}$
and we realize $W_{H}^{k}(r;c_1,\cdots, c_{s} )$ as the $k$th
determinantal variety of a morphism of vector bundles on
$M_{X,H}(r;c_1, \cdots, c_{s})$, provided $H^i(E)=0$ for $i \geq
2$ and  $E \in M_{X,H}(r;c_1, \cdots, c_{s})$. We also compute the
expected dimension of $W_{H}^{k}(r;c_1,\cdots, c_{s} )$. Very
surprisingly we will see that the Brill-Noether stratification
allow us to compare moduli spaces of vector bundles on Hirzebruch
surfaces stables with respect to different polarizations. We will
also study the Brill-Noether loci of the moduli space of instanton
bundles and we will see that they have the expected dimension.
\end{abstract}


\maketitle

\tableofcontents


 \section{Introduction} \label{intro}

Let $X$ be a smooth projective variety of dimension $n$ over an
algebraically closed field $K$  of characteristic 0 and let
$M_{X,H}(r;c_1, \cdots, c_{s})$ be the moduli space of rank $r$,
vector bundles $E$ on $X$ stable with respect to an ample line
bundle $H$ and with fixed Chern classes $c_i(E)=c_i$ for $i=1,
\cdots, s:=min\{r,n\}$. Moduli spaces of stable vector bundles
were constructed in the 1970's by Maruyama and since then they
have been extensively studied from the point of view of algebraic
geometry, of topology and of differential geometry; giving very
pleasant connections between these areas. Unfortunately, except in
the classical case of vector bundles on curves, relatively little
is known about their geometry in terms of the existence and
structure of their subvarieties.

In the case of line bundles on smooth projective curves $C$ of
genus $g$, where the moduli spaces $Pic^d(C)$ of degree $d$ line
bundles are all isomorphic to the Jacobian, Brill-Noether theory
has long provided a basic source of geometrical information. The
classical theory of Brill-Noether has a long history and it is
concerned with the subvarieties $W^k$ of $Pic^d(C)$ whose line
bundles have  at least $k+1$ independent global sections. Basic
questions concerning non-emptiness, connectedness, irreducibility,
dimension, singularities, cohomological classes etc ... have been
answered when the curve $C$ is generic on the moduli space ${
M}_g$ of curves of genus $g$. There are several natural ways to
generalize the classical theory of Brill-Noether. First, instead
of studying line bundles on curves, we can consider vector bundles
of any rank giving rise to the Brill-Noether loci in the moduli
space of stable rank $r$ vector bundles on curves studied by
Newstead, Teixidor and others. Indeed, during the last two
decades, a great amount of job has been made around the
Brill-Noether stratification of the moduli space of degree $d$,
stable, rank $r$ vector bundles on algebraic curves, giving rise
to nice and interesting descriptions of these subvarieties.
Nevertheless, it should be mentioned that in spite of big efforts,
many questions concerning their geometry still remain open.
Second, instead of studying line bundles on curves, we can
consider line bundles on varieties of arbitrary dimension and,
finally, we can go in both directions simultaneously. We can
consider a smooth projective variety $X$ of dimension $n$, an
ample line bundle $H$ on $X$, the moduli space $M_{X,H}(r;c_1,
\cdots, c_{s})$ of rank $r$, $H$-stable vector bundles $E$ on $X$
with fixed Chern classes $c_i(E)=c_i$; $1\le i \le min\{r,n\}$,
and we can study the subschemes in $M_{X,H}(r;c_1, \cdots, c_{s})$
defined by conditions $\{ \dim H^{j}(X,E)\ge n_{j} \}$. In
\cite{GoHi}, Gottsche and Hirschowitz studied the Brill-Noether
loci in the moduli space of stable vector bundles on $\PP^2$ and
in \cite{L}-\cite{L1}, Leyenson studied the Brill-Noether loci in
the moduli space of stable vector bundles on K3 surfaces. The goal
of this paper is to introduce a Brill-Noether theory for moduli
spaces of rank $r$, $H$-stable vector bundles on algebraic
varieties of arbitrary dimension, extending, in particular, all
the above results to higher dimensional varieties. Once we will
have constructed the Brill-Noether stratification in this much
more general context, we will address the main problems and we
will analyze them for several concrete moduli problems.

\vskip 2mm Next we outline the structure of this paper. In section
2, we will define the  Brill-Noether locus
$W_{H}^{k}(r;c_1,\cdots, c_{s} )$ in $M_{X,H}(r;c_1, \cdots,
c_{s})$  as the set of vector bundles in $M_{X,H}(r;c_1, \cdots,
c_{s})$ having at least $k$ independent sections and associated to
this locus we consider the generalized Brill-Noether number
$\rho^{k}_H(r;c_1,\cdots, c_{s} )$. We prove that
$W_{H}^{k}(r;c_1,\cdots, c_{s} )$ has a natural structure of
algebraic variety and that any of its non-empty components has
dimension $\ge \rho^{k}_H(r;c_1,\cdots, c_{s} )$. Therefore, it is
natural to ask whether the numerical condition
$\rho^{k}_H(r;c_1,\cdots, c_{s} )<0$ implies that
$W_{H}^{k}(r;c_1,\cdots, c_{s} )$ is empty, and whether
$\rho^{k}_H(r;c_1,\cdots, c_{s} )\ge 0$ implies that
$W_{H}^{k}(r;c_1,\cdots, c_{s} )$ is non-empty of dimension
$\rho^{k}_H(r;c_1,\cdots, c_{s} )$. We end the section giving
examples of situations where the expected dimension
$\rho^{k}_H(r;c_1,\cdots, c_{s} )<  0$ and
$W_{H}^{k}(r;c_1,\cdots, c_{s} )$ is non-empty; and examples where
$\rho^{k}_H(r;c_1,\cdots, c_{s} )>0$ and $W_{H}^{k}(r;c_1,\cdots,
c_{s} )$ is non-empty of dimension strictly greater than
$\rho^{k}_H(r;c_1,\cdots, c_{s} )$, in contrast with the classical
case. In section 3, we will analyze how the Brill-Noether
stratification will allow us to compare moduli spaces of vector
bundles on a smooth projective surface stable with respect to
different polarizations. It turns out that the ample cone of a
smooth projective surface $X$ has a chamber structure such that
the moduli space $M_{X,H}(r;c_1,c_2)$ only depends on the chamber
of $H$ and the problem consists on determining how the moduli
space changes when the polarization crosses a wall between two
chambers (see Definition \ref{parets}). Very surprisingly we will
realize that in many situations, the Brill Noether locus  controls
these changes.  As a by-product we will obtain a huge family of
examples where the dimension of the Brill-Noether loci coincide
with the expected one. In the last section, we describe the
Brill-Noether loci in the moduli space of mathematical instanton
bundles.

\vskip 2mm  \noindent {\bf Notation:} We will work over an
algebraically closed field $K$ of characteristic zero. Let $X$ be
a smooth projective variety of dimension $n$ and let $E$ be a rank
$r$ vector bundle on $X$ with Chern classes $c_i(E)=c_i$, $1 \leq
i \leq s:= min\{r,n\}$. Set $\chi(r;c_1, \cdots, c_{s}):=\chi(E)$.
We will write $h^i(E)$ (resp. $ext^i(E,F)$) to denote the
dimension of the $i$-th cohomology group $H^i(X,E)=H^i(E)$ (resp.
$i$-th Ext group $Ext^i(E,F)$) as a $K$-vector space. The sheaf
$K_X$ will denote the canonical sheaf on $X$.

\section{Construction of the Brill-Noether Loci}

The goal of this section is to prove the existence of a
Brill-Noether type stratification on the moduli space of stable
vector bundles on smooth projective varieties analogous to the
classical stratification of the Picard variety $Pic^d(C)$ of
degree $d$ line bundles on a smooth projective curve $C$.

\vspace{3mm} Let us start fixing the notation and some basic
definitions. We consider  $X$ an $n$-dimensional  smooth
projective variety, $H$ an ample divisor on $X$, $r \geq 2$ an
integer and $c_i \in H^{2i}(X,\ZZ)$ for $i=1, \cdots,
s=min\{r,n\}$. We denote by $M_{H}=M_{X,H}(r;c_1, \cdots, c_{s})$
the moduli space of rank $r$, vector bundles $E$ on  $X$, with
fixed Chern classes $c_i(E)=c_i$ for $i=1, \cdots, s$, and
$H$-stable according to the following definition due to Mumford
and Takemoto.

\begin{definition} Let $H$ be an ample line bundle on a
smooth projective $n$ dimensional variety $X$. For a torsion free
sheaf $F$ on $X$ we set
$$ \mu(F)=\mu_H (F):=\frac{c_1(F)H^{n-1}}{rk(F)}.$$ The sheaf $F$ is said to be
{\em $H$-semistable} if
$$ \mu_H (E)\le \mu_H (F)
$$ \noindent for all non-zero subsheaves $E\subset F$ with
$rk(E)<rk(F)$; if strict inequality holds then $F$ is {\em
$H$-stable}. Notice that for rank $r$ vector bundles $F$ on $X$
with $(c_1(F)H^{n-1},r)=1$, the concepts of $H$-stability and
$H$-semistability coincide.
\end{definition}

\begin{remark}
\label{estabilitatdepen} The definition of stability depends on
the choice of the ample line bundle $H$. The changes of the moduli
space that occur when the line bundle $H$ varies have been studied
by several people in great detail and reveals interesting
phenomena (see for instance \cite{Qin93}; \cite{JPAA};
\cite{crelle}; \cite{Michigan}; \cite{nagoya} and references
therein). In section 3, we will see how the Brill-Noether loci
allow us to study these changes.
\end{remark}

\vspace{3mm}

The main goal of this section is to construct a subvariety
$W_{H}^{k}(r;c_1,\cdots, c_{s} )$ of $M_H$ whose support is the
set of rank $r$, $H$-stable vector bundles $E$ on $X$ with Chern
classes $c_i$ such that $h^0(E) \geq k$. In other words, we are
going to construct a variety
$$W_{H}^{k}(r;c_1,\cdots, c_{s} )$$ such that
\[Supp(W_{H}^{k}(r;c_1,\cdots, c_{s} ))= \{ E \in M_H | h^0(E) \geq k \}. \]

\vspace{3mm}

To achieve our propose, we first need to recall the definition of
$k$-th determinantal variety. Let $\phi:E \rightarrow F$ be a
morphism between locally free sheaves of ranks $e$, $f$ over an
algebraic variety $X$. Upon choosing local trivializations of $E$
and $F$ over an open set $U \subset X$, the morphism $\phi$ is
represented by an $e \times f$ matrix $A$ of holomorphic
functions. We denote by $U_k$ the subset of $U$ whose ideal is
generated by the $(k+1)\times (k+1)$ minors of $A$. It is easy to
see that $U_k$ does not depend on the choice of the
trivialization, and therefore there is a well-defined subvariety
$X_k(\phi)$ of $X$ such that
\[ X_k(\phi) \cap U=U_k \]
for every open set $U \subset X$. The variety $X_k(\phi)$ is
called the {\bf $k$-th determinantal  variety} or the {\bf $k$-th
degeneracy locus} of $\phi$; it is supported on the set
\[ \{p \in X| rk(\phi_p) \leq k \} \]
and it is clear from the definition that $X_k(\phi)$ has
codimension at most $(e-k)(f-k)$ when it is non-empty and
$$X_k (\phi)\subset Sing(X_{k+1}(\phi))$$
whenever $X_{k+1}(\phi) \neq X$.

\vspace{3mm}

We are now ready to define the Brill-Noether filtration of
$M_{X,H}(r;c_1, \cdots, c_{s})$ and to give a formula for the
expected dimension of the Brill-Noether locus.

\vspace{3mm}

\begin{theorem} \label{construccio} Let $X$ be a smooth projective variety of dimension $n$ and
consider a moduli space  $M_{H}=M_{X,H}(r;c_1, \cdots, c_{s})$ of
rank $r$, $H$-stable vector bundles $E$ on $X$ with fixed Chern
classes $c_i(E)=c_i$. Assume that for any $E \in M_H$, $H^i(E)=0$
for $i \geq 2$. Then, for any $k \geq 0$, there exists a
determinantal variety $W_{H}^{k}(r;c_1,\cdots, c_{s} )$ such that
\[Supp(W_{H}^{k}(r;c_1,\cdots, c_{s} ))= \{ E \in M_H | h^0(E) \geq k \}. \]
Moreover, each non-empty irreducible component of
$W_{H}^{k}(r;c_1,\cdots, c_{s} )$ has dimension at least
$$\dim(M_H) -k(k-\chi(r;c_1, \cdots, c_s)),$$ and
$$W_{H}^{k+1}(r;c_1,\cdots, c_{s} )\subset Sing(W_{H}^{k}(r;c_1,\cdots, c_{s}
))$$ whenever $ W_{H}^{k}(r;c_1,\cdots, c_{s} ) \neq
M_{X,H}(r;c_1, \cdots, c_{s})$.
\end{theorem}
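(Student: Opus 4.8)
The plan is to realize $W_H^k$ as a degeneracy locus of the kind recalled just before the theorem, applied to a morphism of vector bundles on $M_H$ that is built from the relative cohomology of a universal family.

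First I would deal with the fact that $M_H$ is in general only a coarse moduli space, so a Poincar\'e bundle $\cE$ on $X\times M_H$ need not exist globally. It does exist \'etale-locally (equivalently, a quasi-universal family exists), and the crucial point is that the construction below is insensitive to replacing $\cE$ by $\cE\otimes\pi^*L$ for a line bundle $L$ pulled back from the base: twisting replaces the complex $\phi\colon K^0\to K^1$ by $\phi\otimes\id_L$, whose degeneracy loci are unchanged. I would therefore carry out the construction over an \'etale cover $\{S_\alpha\to M_H\}$ and check that the resulting determinantal subschemes agree on overlaps and glue.

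Next, over each $S=S_\alpha$ with projection $\pi\colon X\times S\to S$ and universal family $\cE$, I would invoke cohomology and base change. Since $H^i(E)=0$ for all $i\ge 2$ and all $E\in M_H$, the complex of vector bundles universally representing $R\pi_*\cE$ can be taken concentrated in degrees $0$ and $1$; concretely, after twisting by a relatively very ample $\cE(m)$ with vanishing higher relative cohomology one extracts a two-term complex $\phi\colon K^0\to K^1$ of vector bundles (ranks $e,f$) with $R^0\pi_{T*}\cE_T=\ker(\phi_T)$ and $R^1\pi_{T*}\cE_T=\coker(\phi_T)$ for every $S$-scheme $T$. Restricting to a point $E$ gives $h^0(E)=e-\rank(\phi_E)$ and $h^1(E)=f-\rank(\phi_E)$, so $e-f=h^0(E)-h^1(E)=\chi(r;c_1,\cdots,c_s)=:\chi$ is constant by Riemann--Roch.

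From $h^0(E)=e-\rank(\phi_E)$ it follows that $\{E\mid h^0(E)\ge k\}$ is exactly $\{\rank(\phi_E)\le e-k\}$, i.e. the $(e-k)$-th determinantal variety $X_{e-k}(\phi)$; this defines $W_H^k$ scheme-theoretically through the $(e-k+1)$-minors and yields the support statement. The dimension bound is then immediate from the general fact recalled before the theorem: a non-empty $X_{e-k}(\phi)$ has codimension at most $(e-(e-k))(f-(e-k))=k(f-e+k)=k(k-\chi)$, so each non-empty component of $W_H^k$ has dimension at least $\dim(M_H)-k(k-\chi)$. Likewise $W_H^{k+1}=X_{e-k-1}(\phi)\subset Sing(X_{e-k}(\phi))=Sing(W_H^k)$ whenever $W_H^k=X_{e-k}(\phi)\neq M_H$. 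The main obstacle I anticipate is not the cohomology-and-base-change step, which is routine once $\cE$ is fixed, but verifying that the \'etale-local degeneracy loci are independent of all the choices (the universal family up to twist, and the auxiliary twist $m$) so that they descend to a well-defined subscheme of $M_H$; the dimension and singularity assertions are local and transfer directly from the determinantal picture.
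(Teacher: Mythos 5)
Your proposal is correct and follows essentially the same route as the paper: realize $W_H^k(r;c_1,\cdots,c_s)$ as the $(e-k)$-th degeneracy locus of a two-term complex of vector bundles on $M_H$ obtained from the relative cohomology of a (locally existing) universal family after a twist killing higher direct images, then quote the general codimension and singular-locus facts for determinantal varieties, handling the coarse-moduli issue by local construction and gluing. The only cosmetic difference is that the paper builds the complex explicitly as $\gamma\colon \nu_*\cU(\cD)\to\nu_*(\cU(\cD)/\cU)$ from the sequence $0\to\cU\to\cU(\cD)\to\cU(\cD)/\cU\to 0$ and reads off the locus via the corank condition $h^1(E)\ge k-\chi$, whereas you invoke the abstract two-term representative of $R\pi_*$ and read off $h^0$ from the kernel directly.
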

\begin{proof} First of all assume that $M_H$ is
a fine moduli space. Let $\cU \rightarrow X \times M_{H}$ be a
universal family such that for any $t \in M_H$, $\cU|_{X\times \{
t\}} =E_t$ is an $H$-stable rank $r$ vector bundle on $X$ with
Chern classes $c_i(E_t)=c_i$. Let $D$ be an effective divisor on
$X$ such that for any $t \in M_H$,
\begin{equation} \label{D} h^0(E_t(D))= \chi(E_t(D)), \quad
H^i(E_t(D))=0, \quad i \geq 1. \end{equation} Consider $\cD=D
\times M_H$ the corresponding product divisor on $X \times M_H$
and denote by
 \[\nu: X \times M_H \rightarrow M_H \]
 the natural projection. It follows from (\ref{D}) and the base
 change theorem that $\nu_*\cU(\cD)$ is a locally free sheaf of
 rank $\chi(E_t(D))$ on $M_H$ and
 \[ R^i\nu_*\cU(\cD)=0, \quad i>0.\]
 Therefore, applying the functor $\nu_*$ to the short exact
 sequence
 \[ 0 \rightarrow \cU \rightarrow \cU(\cD) \rightarrow \cU(\cD)/\cU \rightarrow 0\]
 we get the following exact sequence
  \[ 0 \rightarrow \nu_*\cU \rightarrow \nu_*\cU(\cD) \mapright{\gamma} \nu_*(\cU(\cD)/\cU) \rightarrow R^1\nu_*\cU \rightarrow 0.\]
The map $\gamma$ is a morphism between locally free sheaves on
$M_H$ of rank $\chi(E_t(D))$ and $\chi(E_t(D))-\chi(E)$
respectively and the $(\chi(E_t(D))-k)$-th determinantal variety
$$W^k_{H}(r;c_1,\cdots, c_{s} ) \subset M_H$$ associated to it has
support
\[ \{E_t \in M_H| \rank \gamma_{E_t} \leq \chi(E_t(D))-k \} \]
i.e. $W^k_{H}(r;c_1,\cdots, c_{s} )$ is the locus where the fiber
of $R^1\nu_*\cU$ has dimension at least
$(\chi(E_t(D))-\chi(E_t))-(\chi(E_t(D))-k)=k-\chi(E_t)$. For any
$E_t \in M_H$ the assumption $h^i(E_t)=0$, $i \geq 2$, implies \[
h^1(E_t)= h^0(E_t)-\chi(E_t).\] Thus,
\[\begin{array}{ll}Supp(W_{H}^{k}(r;c_1,\cdots, c_{s} )) & = \{ E \in M_H | h^1(E) \geq k-\chi(E) \}
\\ & = \{ E \in M_H | h^0(E) \geq k \}. \end{array} \] Using the language of Fitting ideals, we have that
$W_{H}^k(r;c_1,\cdots, c_{s} )$ is the subvariety of $M_{H}$
defined by the $(k-\chi(E_t))$th Fitting ideal of $R^1\nu_* \cU$.
Moreover, it can also be seen that $W_{H}^k(r;c_1,\cdots, c_{s} )$
represents the functor
\[S \rightarrow \bigg \{ \begin{array}{l} \mbox{equivalence classes of families
$\cF$ on $S \times M_H \mapright{\nu} M_H$} \\
\mbox{of $H$-stable rank $r$ vector bundles $E$ on $S$ with
$c_i(E)=c_i$} \\
\mbox{such that the Fitting rank of $R^1\nu_*\cF$ is at least
$(k-\chi(E))$}
\end{array} \bigg \}.
\]
Finally, since $W^k_{H}(r;c_1,\cdots, c_{s} )$ is a
$(\chi(E_t(D))-k)$-determinantal variety associated to a morphism
between locally free sheaves of rank $\chi(E_t(D))$ and
$\chi(E_t(D))-\chi(E)$ respectively, any of its non-empty
irreducible components has dimension greater or equal to
$\dim(M_H) -k(k-\chi(E))$ and
$$W_{H}^{k+1}(r;c_1,\cdots, c_{s} )\subset Sing(W_{H}^{k}(r;c_1,\cdots, c_{s}
))$$ whenever $ W_{H}^{k}(r;c_1,\cdots, c_{s} ) \neq
M_{X,H}(r;c_1, \cdots, c_{s})$.

If $M_H$ is not a fine moduli space, it is also possible to carry
out the construction of the Brill-Noether locus using only the
local existence of a universal sheaf on $M_H$. Indeed, we carry
out the constructions locally, we show the independence of the
choice of the locally universal sheaf and we conclude that the
construction glue as a global algebraic object.
\end{proof}

\begin{remark}
(1) The cohomological assumptions in Theorem \ref{construccio} are
natural if we want to have a filtration of the moduli space $M_H$
by the subvarieties $W_{H}^k(r;c_1,\cdots, c_{s} )$. Indeed, if
$X$ is an $n$-dimensional projective variety, then any vector
bundle $E$ on $X$ has $n+1$ cohomological groups whose dimensions
are related by Riemann-Roch theorem and one is forced to look for
a multigraded filtration of the moduli space $M_H$ by means of the
sets $\{ E \in M_H| h^i(E) \geq k_i\}$. Under the assumptions of
Theorem \ref{construccio}, $h^i(E)=0$ for $i \geq 2 $ and for any
$E \in M_H$, the only non-vanishing cohomology groups are $H^0(E)$
and $H^1(E)$ and their dimensions are subject to one relation
given by Riemann-Roch theorem: $\dim H^0(E)-\dim
H^1(E)=\chi(E)=\chi(r;c_1, \cdots, c_s)$. Hence, it makes sense to
consider only the filtration of the moduli space $M_H$ by the
dimension of the space of global sections.

(2) We want to point out that there exists plenty of vector
bundles satisfying the cohomological conditions of Theorem
\ref{construccio}. For instance, instanton bundles on
$\PP^{2n+1}$, Schwarzenberger bundles on $\PP^n$, Steiner bundles
on $\PP^n$, Steiner and Spinor bundles on a hyperquadric $Q_n
\subset \PP^{n+1}$, etc.
\end{remark}

\begin{definition} The variety $W_{H}^k(r;c_1,\cdots, c_{s} )$ is called the {\bf $k$-Brill-Noether
locus} of the moduli space $M_{H}$ (or simply Brill-Noether locus
if there is no confusion) and  $$\rho_{H}^k(r;c_1,\cdots, c_{s}
):= \dim M_H-k(k-\chi(r;c_1,\cdots, c_{s} ))$$ is called the {\bf
generalized Brill-Noether number}.

By Theorem \ref{construccio}, the Brill-Noether locus
$W_{H}^k(r;c_1,\cdots, c_{s} )$ has dimension greater or equal to
$\rho_{H}^k(r;c_1,\cdots, c_{s} )$ and the number
$\rho_{H}^k(r;c_1,\cdots, c_{s} )$ is also called the expected
dimension of the corresponding Brill-Noether locus. Hence,  we are
led to pose the question whether the dimension of the
Brill-Noether locus $W_{H}^k(r;c_1,\cdots, c_{s} )$ and its
expected dimension coincide provided the Brill-Noether locus
$W_{H}^k(r;c_1,\cdots, c_{s} )$ is non-empty.
\end{definition}

\begin{notation} If there is no confusion then, we will simply write $W^k$ and $\rho^k$ instead of
$W_{H}^k(r;c_1,\cdots, c_{s} )$ and $\rho_{H}^k(r;c_1,\cdots,
c_{s} )$.

 We will say that the
Brill-Noether locus is defined in the moduli space $M_{H}$
whenever the assumptions of Theorem \ref{construccio} are
satisfied, i.e. for any $E \in M_H$, $H^i(E)=0$ for $i \geq 2$.
\end{notation}

\begin{remark}
Notice that when $X$ is a smooth projective curve and we consider
the moduli space $Pic^d(X)$ of degree $d$ line bundles on $X$,
then we recover the classical Brill-Noether loci which have been
well known since the last century, and the generalized
Brill-Noether number is the classical Brill-Noether number
$\rho=\rho(g,r,d)=g-(r+1)(g-d+r)$.
\end{remark}

\begin{corollary} \label{superficie} Let $X$ be a smooth projective surface and
let   $M_{H}=M_{X,H}(r;c_1,c_2)$ be a moduli space  of rank $r$,
$H$-stable vector bundles $E$ on $X$ with fixed Chern classes
$c_i(E)=c_i$. Assume that $c_1H \geq r K_X H$.  Then, for any $k
\geq 0$, there exists a determinantal variety
$W_{H}^{k}(r;c_1,c_2)$ such that
\[Supp(W_{H}^{k}(r;c_1,c_2))= \{ E \in M_H | h^0(E) \geq k \}. \]
Moreover, each non-empty irreducible component of
$W_{H}^{k}(r;c_1,c_2)$ has dimension greater or equal to
$$\rho_H^k(r;c_1,c_2)=\dim(M_H) -k(k-r(1+P_a(X))+\frac{c_1K_X}{2}-\frac{c_1^2}{2}+c_2)$$
and $W_{H}^{k+1}(r;c_1,c_2) \subset Sing(W_{H}^{k}(r;c_1,c_2))$
whenever $W_{H}^{k}(r;c_1,c_2) \neq M_{X,H}(r;c_1,c_2)$.
\end{corollary}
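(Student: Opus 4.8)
The plan is to obtain Corollary \ref{superficie} as the specialization of Theorem \ref{construccio} to the case $n=2$, so that the whole argument reduces to two points: (i) verifying the cohomological hypothesis of that theorem under the numerical condition $c_1H\ge rK_XH$, and (ii) rewriting the expected dimension by Riemann--Roch on a surface. For a surface the hypothesis ``$H^i(E)=0$ for $i\ge 2$'' collapses to the single vanishing $H^2(E)=0$ for every $E\in M_H$. First I would invoke Serre duality, $H^2(X,E)\cong H^0(X,E^{\vee}\otimes K_X)^{\vee}$, which turns the problem into showing $H^0(E^{\vee}\otimes K_X)=0$ for every $H$-stable $E\in M_H$.

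To prove this vanishing I would argue by stability. Since dualizing and tensoring by a line bundle preserve $H$-stability, $F:=E^{\vee}\otimes K_X$ is $H$-stable of rank $r\ge 2$, with slope $\mu_H(F)=K_XH-\tfrac{c_1H}{r}$; the hypothesis $c_1H\ge rK_XH$ is exactly the assertion $\mu_H(F)\le 0$. Then I would use the standard fact that a nonzero section of a torsion-free sheaf $F$ yields an injection $\cO_X\hookrightarrow F$ whose saturation is a line bundle $L$ with $\mu_H(L)\ge 0$; as $L$ is a proper subsheaf of strictly smaller rank, $H$-stability forces $\mu_H(L)<\mu_H(F)\le 0$, a contradiction. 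Hence $H^0(F)=0$ and $H^2(E)=0$. I regard this translation of the numerical hypothesis into the slope inequality $\mu_H(F)\le 0$, together with the no-sections lemma, as the only real point of the proof, although it is entirely routine.

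With the hypotheses of Theorem \ref{construccio} in force, that theorem immediately supplies the determinantal variety $W_H^k(r;c_1,c_2)$ with the prescribed support, the lower bound $\dim(M_H)-k(k-\chi(r;c_1,c_2))$ on every non-empty irreducible component, and the inclusion $W_H^{k+1}(r;c_1,c_2)\subset Sing(W_H^{k}(r;c_1,c_2))$ away from the whole moduli space. The last step is then to make $\chi(r;c_1,c_2)=\chi(E)$ explicit through Hirzebruch--Riemann--Roch on $X$: expanding the degree-two part of $\operatorname{ch}(E)\operatorname{td}(X)$ and using Noether's formula $\chi(\cO_X)=\tfrac{1}{12}(K_X^2+c_2(X))=1+P_a(X)$ gives $\chi(E)=r(1+P_a(X))-\tfrac{c_1K_X}{2}+\tfrac{c_1^2}{2}-c_2$. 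Substituting this into $\dim(M_H)-k(k-\chi(E))$ reproduces the stated $\rho_H^k(r;c_1,c_2)$, completing the proof; here the only care needed is with signs in the Todd class and in the slope computation.
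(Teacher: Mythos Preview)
Your proof is correct and follows essentially the same route as the paper: Serre duality reduces the required vanishing $H^2(E)=0$ to $H^0(E^{\vee}\otimes K_X)=0$, and then $H$-stability of the dual (twist) together with the slope inequality $\mu_H(E^{\vee}\otimes K_X)\le 0$ rules out nonzero sections, after which Theorem~\ref{construccio} and Riemann--Roch give the stated formula. The only cosmetic difference is that the paper injects $\cO_X(-K_X)\hookrightarrow E^{*}$ directly rather than saturating an injection $\cO_X\hookrightarrow E^{\vee}\otimes K_X$; your saturation step is harmless but unnecessary, since the paper's definition of stability already applies to arbitrary proper subsheaves.
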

\begin{proof} First of all notice that for any $E \in M_H$, the
numerical condition $c_1(E)H > rK_X H$ implies that $H^2(E)=0$.
Indeed,  by Serre duality we have:
\[H^2(E) \cong H^0(E^*(K_X)) \] and since $E$ is an $H$-stable vector bundle on $X$,
$E^*$ is also $H$-stable. Thus, if $H^2(E) \neq 0$, $\cO_{X}(-K_X)
\hookrightarrow E^*$ and since $E^*$ is $H$-stable we get
\[(-K_X H)< \frac{c_1(E^*)H}{r}=-\frac{c_1(E)H}{r},\]
which contradicts the assumption $c_1(E)H \geq rK_X H$. Hence the
result follows from Theorem \ref{construccio} and the fact that by
the Riemann-Roch theorem \[
\chi(r;c_1,c_2)=r(1+P_a(X))-\frac{c_1K_X}{2}+\frac{c_1^2}{2}-c_2.\]
\end{proof}

\vspace{3mm}

For any sheaf $E$ on $\PP^2$, denote by $\chi^+=\chi^+(E):= \max
\{\chi(E), 0 \}$. In \cite{GoHi}, G\"{o}ttsche and Hirschowitz
gave, under the assumption $c_1>-3r$, a lower bound for the
codimension of the Brill-Noether loci $W^{\chi^++1}(r;c_1,c_2)$ of
rank $r$, stable vector bundles $E$ on $\PP^2$ with fixed Chern
classes $c_i(E)=c_i$ such that
 $h^0(E) \geq \chi^++1$. From the previous result we
get the following upper bound:

\begin{corollary} Let $W^{\chi^++1}(r;c_1,c_2)$ be the Brill-Noether locus  of rank $r$, stable vector bundles $E$
on $\PP^2$ with fixed Chern classes $c_i(E)=c_i$ such that
 $h^0(E) \geq \chi^++1$. Assume that $c_1>-3r$. Then, the following holds:

\noindent(a) If $\chi^+= \chi(r;c_1,c_2)>0$
\[2 \leq \codim(W^{\chi^++1}(r;c_1,c_2)) \leq \chi(r;c_1,c_2)+1. \]
(b) If $\chi^+= \chi(r;c_1,c_2)=0$
\[ \codim(W^{\chi^++1}(r;c_1,c_2))=1. \]
(c) If $\chi^+=0> \chi(r;c_1,c_2)$
\[ \codim(W^{\chi^++1}(r;c_1,c_2)) \leq (\chi^++1)(\chi^++1-\chi(r;c_1,c_2)). \]
\end{corollary}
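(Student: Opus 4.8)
The plan is to combine the new upper bound for the codimension, read off directly from the determinantal description in Corollary \ref{superficie}, with the lower bound established by G\"ottsche and Hirschowitz in \cite{GoHi}. First I would check that we are in the situation of Corollary \ref{superficie}: on $X=\PP^2$ we have $P_a(\PP^2)=0$ and $K_{\PP^2}=\cO_{\PP^2}(-3)$, so, taking $H=\cO_{\PP^2}(1)$, the hypothesis $c_1H\geq rK_XH$ reads $c_1\geq -3r$, and the strict inequality $c_1>-3r$ is exactly the condition $c_1H>rK_XH$ which, as in the proof of Corollary \ref{superficie}, forces $H^2(E)=0$ for every $E\in M_H$. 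Thus the Brill-Noether locus is defined, and Riemann-Roch on $\PP^2$ gives $\chi(r;c_1,c_2)=r+\frac{3}{2}c_1+\frac{1}{2}c_1^2-c_2$.

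Next I would extract the upper bound from the construction in Theorem \ref{construccio}, where $W^k:=W^k_H(r;c_1,c_2)$ is the $(\chi(E_t(D))-k)$-th determinantal variety of the morphism $\gamma$ between locally free sheaves of ranks $e=\chi(E_t(D))$ and $f=\chi(E_t(D))-\chi$. Since a determinantal variety $X_{k'}(\gamma)$ has codimension at most $(e-k')(f-k')$ when non-empty, choosing $k'=e-k$ gives
\[
\codim(W^k)\leq (e-k')(f-k')=k(k-\chi).
\]
Setting $k=\chi^++1$ and running through the three cases yields the upper bounds immediately: for $\chi>0$ one gets $(\chi+1)\cdot 1=\chi+1$; for $\chi=0$ one gets $1\cdot 1=1$; and for $\chi<0$ one gets $1\cdot(1-\chi)=(\chi^++1)(\chi^++1-\chi)$. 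This already proves part (c) and the upper inequalities in (a) and (b).

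For the lower bounds I would appeal to \cite{GoHi}. Under $c_1>-3r$, G\"ottsche and Hirschowitz prove that, when $\chi>0$, the locus $W^{\chi^++1}$ has codimension at least $2$, which is the remaining inequality in (a) (here $\chi\geq 1$, so the interval $[2,\chi+1]$ is non-degenerate). In case (b) the two sheaves in $\gamma$ have equal rank $e=f$, so $W^1$ is the zero locus of the single section $\det\gamma$, hence empty or of pure codimension $1$; since the generic stable bundle has natural cohomology and therefore $h^0=\chi^+=0$ (again \cite{GoHi}), $W^1\neq M_H$, and comparison with the upper bound forces $\codim(W^1)=1$.

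The genuinely new input is the upper bound, and that is where the care lies: everything rests on correctly identifying the ranks $e$ and $f$ of the two locally free sheaves appearing in $\gamma$ in Theorem \ref{construccio} and on applying the classical codimension estimate $(e-k')(f-k')$ with the right determinantal index $k'=e-k$. The lower bounds are not reproved but quoted from \cite{GoHi}; the only thing to verify is that their normalisation (the range $c_1>-3r$, the threshold $\chi^++1$ and the bound $2$) agrees with ours, and that the relevant loci are non-empty, so that the codimension statements are meaningful.
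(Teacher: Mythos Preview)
Your proof is correct and follows exactly the paper's approach: the upper bounds are read off from the determinantal codimension estimate in Corollary~\ref{superficie} (via Theorem~\ref{construccio}) after observing that $c_1>-3r$ is precisely the condition $c_1H>rK_XH$ for $X=\PP^2$, and the lower bounds in (a) and (b) are quoted from \cite{GoHi}, Theorem~1. Your treatment of case (b), invoking generic natural cohomology to ensure $W^1\neq M_H$, is a slight elaboration on the paper's terse ``lower bounds follow from \cite{GoHi}'', but the substance is the same.
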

\begin{proof} The lower bounds follow from \cite{GoHi}; Theorem 1.
Since $K_{\PP^2}=\cO_{\PP^2}(-3)$, the hypothesis $c_1>-3r$ is
equivalent to $c_1(E)H > rK_X H$ and the upper bounds follow from
Corollary \ref{superficie}.
\end{proof}

\vspace{3mm}

In subsequent sections we will see that there are plenty of
situations where the assumptions of Theorem \ref{construccio} are
satisfied and we will  prove that in  several of them the
Brill-Noether loci have exactly the expected dimension, showing
 that the bound given in Theorem \ref{construccio} is
sharp.

\vspace{3mm}

Once we have proved the existence of these varieties, it is
natural to ask whether the condition $\rho_H^k(r;c_1,\cdots, c_{s}
)<0$ implies that the variety $W_{H}^k(r;c_1,\cdots, c_{s} )$ is
empty and whether the condition $\rho_H^k(r;c_1,\cdots, c_{s} )
\geq 0$ implies that the variety $W_{H}^k(r;c_1,\cdots, c_{s} )$
is non-empty. Indeed we are led to pose the following three
questions.

\begin{question} \label{mainquestion} Let $X$ be a smooth projective variety of
dimension $n$. We consider a moduli space $M_{H}(r;c_1,\cdots,
c_{s} )$  of rank $r$, $H$-stable vector bundles on $X$ where the
Brill-Noether locus is defined.

\begin{itemize}
\item[(1)] Whether $ \rho_H^k(r;c_1,\cdots, c_{s} )<0$ implies
$W_{H}^k(r;c_1,\cdots, c_{s} ) =\emptyset$ ? \item[(2)]Whether $
\rho_H^k(r;c_1,\cdots, c_{s} )  \geq 0$ implies
$W_{H}^k(r;c_1,\cdots, c_{s} ) \neq \emptyset$ ? \item[(3)]Whether
$ \rho_H^k(r;c_1,\cdots, c_{s} )  \geq 0$ and
$W_{H}^k(r;c_1,\cdots, c_{s} ) \neq \emptyset$ implies
 $$ \rho_H^k(r;c_1,\cdots, c_{s} )=\dim W_{H}^k(r;c_1,\cdots, c_{s}
 ) \quad ?$$
\end{itemize}
\end{question}

If $C$ is a smooth algebraic curve and the moduli space is the
Picard variety $Pic^d(C)$ of degree $d$ line bundles on $C$, the
answer to  Question \ref{mainquestion} is well-known. In fact,
classical Brill-Noether theory has its roots dating more than a
century ago and it is concerned with the subvarieties of the
Picard variety $Pic^d(C)$ determined by  degree $d$ line bundles
on $C$ having at least a specified number of independent sections.
Basic questions, concerning non-emptiness, connectedness,
irreducibility, dimension, singularities, cohomology classes, etc
... have been completely answered when the underlying curve is a
generic curve in the moduli space $M_g$ of curves of genus $g$.
Indeed, the Brill-Noether locus is non-empty if $\rho \geq 0$ and
connected if $\rho >0$. For a generic curve in $M_g$, the
Brill-Noether locus is empty if $\rho <0$ and is irreducible of
dimension $\rho$ if $\rho>0$. Modern proofs of these results have
been given by Kempf, Kleiman and Laksov, Fulton and Lazarsfeld,
Griffiths and Harris, and Gieseker and a full treatment of this is
contained in \cite{ACGH}. The Brill-Noether loci in the moduli
space of vector bundles of higher rank on a generic curve $C$ has
been studied by Teixidor and others in a series of papers in
1994-2007.

\vspace{3mm}

Next example shows that if we deal with algebraic varieties of
dimension greater than one, Question \ref{mainquestion} (1) is no
longer true; it gives an example of negative generalized
Brill-Noether number and the corresponding Brill-Noether locus is
non-empty, in contrast with the classical case. Even more, we give
examples where the expected dimension of the Brill-Noether locus
and the dimension of the Brill-Noether locus do not coincide in
spite of being positive the generalized Brill-Noether number.
Indeed, we have

 \vspace{3mm}
\begin{example} Let $X=\PP^1 \times \PP^1$ be a quadric surface in $\PP^3$.
We denote by $l_1$, $l_2$ the generators of $Pic(X)$ and for any
integer $n \geq 2$ we fix the ample line bundle $L=l_1+nl_2$. We
will describe the Brill-Noether stratification of $M_{X,L}(2;
(2n-1)l_2, 2n)$. Indeed,  since $(2n-1)l_2 L=2n-1> -4n-4=2K_X L$,
its existence is guaranteed  by Corollary \ref{superficie}. Let us
now prove that for any integer $n \geq 2$ and any integer $j$, $0
\leq j \leq n$, the Brill-Noether locus $W_L^{j}(2;(2n-1)l_2,2n)$
is non-empty. To this end,  we consider $\cF$ the irreducible
family parameterizing rank two vector bundles $E$ on $X$ given by
an exact sequence
 \begin{equation}
 \label{rk2se5} \hspace{6mm}
 0 \rightarrow  \cO_{X} \rightarrow   E  \rightarrow
 \cO_{X}((2n-1)l_2)\otimes I_{Z} \rightarrow 0   \end{equation}
where $Z$ is a locally complete intersection 0-dimensional scheme
of length $2n$ such that $H^0I_{Z}((2n-1)l_2)=0$.

\vspace{2mm} Notice that since $|Z|=2n$ and
$h^0\cO_X((2n-1)l_2)=2n$, the condition
\[ H^0I_{Z}((2n-1)l_2)=0 \] is satisfied for all generic $Z \in Hilb^{2n}(X)$
and $\cF$ is non-empty. In addition, it can be seen that $\dim
\cF= 4(2n)-3$ and that $ \cF \hookrightarrow M_{X,L}
(2;(2n-1)l_2,2n)$ (see \cite{nagoya}; Proposition 4.6).

\vskip 2mm \noindent{\bf Claim 1:} $W^1_L(2;(2n-1)l_2,2n) \cong
\cF$.

\vskip 2mm  \noindent{\bf Proof of Claim 1:} Any $E \in \cF$ is
$L$-stable and since $H^0I_{Z}((2n-1)l_2)=0$, we have $h^0(E)=1$.
Thus $\cF \subset W^1_L(2;(2n-1)l_2,2n)$. Let us prove the
converse. We take a vector bundle
 $E \in W^1_L(2;(2n-1)l_2,2n)$ and     a non-zero global
section  $s$ of $E$. We denote by $Y$ its scheme of zeros and by
$D$ the maximal effective divisor contained in $Y$. Then $s$ can
be regarded as a section of $E(-D)$ and its scheme of zeros has
codimension $\geq 2$. Thus, for some effective divisor
$D=al_1+bl_2$ we have a short exact sequence
\begin{equation}
 \label{} \hspace{6mm}
 0 \rightarrow  \cO_{X}(D) \rightarrow   E  \rightarrow
 \cO_{X}((2n-1)l_2-D)\otimes I_{Z} \rightarrow 0   \end{equation}
where $Z$ is a locally complete intersection 0-cycle. Since $D$ is
effective, $a,b \geq 0$ and by the $L$-stability of $E$ we have
\[(al_1+bl_2)L = (an+b) < \frac{2n-1}{2}=\frac{c_1(E)L}{2}. \]
Therefore, $a=b=0$ and in fact $E \in \cF$.

It follows from Claim 1 that the Brill-Noether locus
$W^1_L(2;(2n-1)l_2,2n)$ is an irreducible variety of dimension
$8n-3$ and notice that in this case, its dimension coincides with
the expected one. Indeed,

\[\begin{array}{ll}
\rho_L^1(2;(2n-1)l_2,2n) &= \dim M_{X,L}(2;(2n-1)l_2,2n)
-1(1-\chi(2;(2n-1)l_2,2n)) \\ & = 8n-3 \\
 \end{array} \]
 since by Riemann-Roch theorem,
 \begin{equation} \label{RR} \chi(2;(2n-1)l_2,2n)=2+\frac{((2n-1)l_2)(2l_1+2l_2)}{2}+ \frac{((2n-1)l_2)^2}{2}-2n=1.
 \end{equation}

For any $i$, $1 \leq i \leq n-1$, we can choose a $0$-dimensional
scheme $Z_i$ on $X$ of length $2n$ such that
$h^0(I_{Z_i}((2n-1)l_2))=i$ and $h^0(I_{Z_i}((2n-i-1)l_2))=0$.
Thus, if we denote by $E_i$ the rank two vector bundle on $X$
given by the exact sequence
\begin{equation}
 \label{} \hspace{6mm}
 0 \rightarrow  \cO_{X} \rightarrow   E_i  \rightarrow
 \cO_{X}((2n-1)l_2)\otimes I_{Z_i} \rightarrow 0   \end{equation}
we have $h^0(E_i)=i+1$.

\vskip 2mm \noindent{\bf Claim 2:} $E_i$ is $L$-stable.

\vskip 2mm \noindent{\bf Proof of Claim 2:} Since $E_i$ is given
by a non-trivial extension
\begin{equation}
 \label{} \hspace{6mm}
 0 \rightarrow  \cO_{X} \rightarrow   E_i  \rightarrow
 \cO_{X}((2n-1)l_2)\otimes I_{Z_i} \rightarrow 0,   \end{equation}
given a sub-line bundle $\cO_X(al_1+bl_2)$ of $E_i$ we have two
possible cases:
\[ (1) \quad \cO_X(al_1+bl_2) \hookrightarrow \cO_X \]
\[ (2) \quad \cO_X(al_1+bl_2) \hookrightarrow \cO_{X}((2n-1)l_2)\otimes I_{Z_i}. \]
In case $(1)$, $(al_1+bl_2)L \leq 0 <
\frac{2n-1}{2}=\frac{c_1(E_i)L}{2}$. In case $(2)$,
$-al_1+(2n-1-b)l_2$ is an effective divisor and hence $a \leq 0$
and $b \leq 2n-1$. On the other hand, since
\[H^0(\cO_X(al_1+(b-i)l_2)) \subset  H^0(I_{Z_i}((2n-i-1)l_2))=0 \]
we have $a <0$ or $b <i$. If $b<i$, since $a \leq 0$  and $i\le
n-1$, we have
$$(al_1+bl_2)L =an+b \le n-1< \frac{2n-1}{2} =\frac{c_1(E_i)L}{2}.$$
Assume $a<0$. In that case, since $b \leq 2n-1$ we get
$$(al_1+bl_2)L =an+b \leq -n+b \leq n-1 < \frac{2n-1}{2} =\frac{c_1(E_i)L}{2}.$$
Therefore, $E_i$ is $L$-stable which proves Claim 2.

\vskip 2mm By Claim 2, $E_i \in W^{i+1}_L(2;(2n-1)l_2,2n)$ and we
get a chain of non-empty Brill-Noether loci
\[M_{X,L}(2;(2n-1)l_2,2n) \supset W^1_L(2;(2n-1)l_2,2n) \supset W^2_L(2;(2n-1)l_2,2n)\supset \cdots \hspace{25mm} \]
 \[\hspace{60mm} \cdots
\supset  W^{n}_L(2;(2n-1)l_2,2n) \supsetneq  \emptyset. \]

Notice that for any $k$, $1\le k \le n$, such that $8n-3 <k(k-1)$
the Brill-Noether locus $W^{k}_L(2;(2n-1)l_2,2n)$ is non-empty and
the generalized Brill-Noether number, $\rho^k_L(2;(2n-1)l_2,2n)$,
is negative. In fact,
\[
\begin{array}{ll}\rho^k_L(2;(2n-1)l_2,2n) & = \dim
M_{X,L}(2;(2n-1)l_2,2n)-k(k-\chi(2;(2n-1)l_2,2n)) \\
& =8n-3-k(k-1) \end{array}\] where the last equality follows from
Proposition \ref{moduli} and the equation (\ref{RR}).

Finally, we have that for any $k$ such that $2<k(k-1)<8n-3$, the
generalized Brill-Noether number, $\rho^k_L(2;(2n-1)l_2,2n)$, is
positive; however, the Brill-Noether locus
$W^{k}_L(2;(2n-1)l_2,2n)$ is non-empty and its dimension is
greater than the expected one. In fact, it is enough to observe
that
$$\dim W^{k}_L(2;(2n-1)l_2,2n)=8n-2k-1>\rho^k_L(2;(2n-1)l_2,2n).$$

\end{example}

\section{Brill-Noether loci and change of polarizations}

In this section we will see that the Brill-Noether stratification
allow us to compare moduli spaces of vector bundles on smooth
projective surfaces, stable with respect to different
polarizations.

\vspace{3mm}

Let $X$ be a smooth projective variety of dimension $n$. As we
pointed out in Remark \ref{estabilitatdepen},  the notion of
stability of vector bundles on $X$ strongly depends on the ample
divisor. Hence it is natural to consider the following interesting
problem:

\begin{problem}
What is the difference between the moduli spaces
$$M_H=M_{X,H}(r;c_1, \cdots, c_{min\{r,n\}}) \quad \mbox{and}
\quad  M_L=M_{X,L}(r;c_1, \cdots, c_{min\{r,n\}})$$ where $H$ and
$L$ are two different polarizations?
\end{problem}

It turns out that the ample cone of $X$ has a chamber structure
such that the moduli space $M_{X,H}(r;c_1, \cdots,
c_{min\{r,n\}})$ only depends on the chamber of $H$ and the
problem consists on determining how the moduli space changes when
the polarization crosses a wall between two chambers (see
Definition \ref{parets}). These changes have been explicitly
described in very few occasions (see for instance \cite{JPAA},
\cite{Michigan}, \cite{nagoya}).

\vspace{3mm}

 We will focus our attention in  case
where $X$ is a Hirzebruch surface and we will deal with stable
rank two vector bundles on $X$. Very surprisingly we will realize
that in a huge family of moduli spaces, the difference between two
moduli spaces $M_{L}(2;c_1,c_2)$ and $M_{H}(2;c_1,c_2)$ (where $L$
and $H$ are two polarizations sitting in chambers sharing a common
wall), is precisely described by suitable Brill-Noether loci.
Moreover, we will be able to  compute the dimension of these
Brill-Noether loci  and we will prove that the expected dimension
of these Brill-Noether loci is indeed the dimension.

\vspace{3mm}

To start with,  let us recall the basic results about walls and
chambers due to Qin (\cite{Qin93}).

\vspace{3mm}

\begin{definition} \label{parets}
 (i) Let  $\xi \in Num(X)
\otimes \RR$. We define
\[ \cW^{\xi} := C_X \cap \{ x \in Num(X) \otimes \RR | x \xi =0 \}. \]
\noindent (ii) Define ${ \cW}(c_1,c_2)$ as the set whose elements
consist of $\cW^{\xi}$, where  $\xi$ is the numerical equivalence
class of a divisor $D$ on $X$ such that $\cO_X(D+c_1)$ is
divisible by 2 in $Pic(X)$, and that
\[ D^2  <0 ;\hspace{8mm} c_2 + \frac{D^2-c_1^2}{4}= [Z] \]
for some locally complete intersection codimension-two cycle $Z$
in $X$.

\noindent (iii) A wall of type $(c_1,c_2)$ is an element in ${
\cW}(c_1,c_2)$. A chamber of type $(c_1,c_2)$ is a connected
component of $C_X \setminus{ \cW}(c_1,c_2)$. A $\ZZ$-chamber of
type $(c_1,c_2)$ is the intersection of $Num(X)$ with some chamber
of type $(c_1,c_2)$.

\noindent (iv) A face of type $(c_1,c_2)$ is ${ F}= \cW^{\xi} \cap
{\overline C}$, where $\cW^{\xi}$ is a wall of type $(c_1,c_2)$
and $C$ is a chamber of type $(c_1,c_2)$.
\end{definition}

\vspace{4mm} We say that a wall $\cW^{\xi}$ of type $(c_1,c_2)$
separates two polarizations $L$ and $L'$ if, and only if, $\xi  L
<0< \xi  L'$.

\vspace{4mm}
\begin{remark}
\label{nomes:depen} In \cite{Qin93}; Corollary 2.2.2 and Remark
2.2.6, Qin proves that the moduli space $M_{X,L}(2;c_1,c_2)$ only
depends on the chamber of $L$ and that the study of moduli spaces
of rank two vector bundles stable with respect to a polarization
lying on walls may be reduced to the study of moduli spaces of
rank two vector bundles stable with respect to a polarization
lying on $\ZZ$-chambers.
\end{remark}

\vspace{4mm}
\begin{definition}
\label{E:xi}  Let $\xi $ be a numerical equivalence class defining
a wall of type $(c_{1},c_{2})$. We define $\cE_{\xi
}(c_{1},c_{2})$ as the quasi-projective variety parameterizing
rank 2 vector bundles $E$ on $X$ given by an extension
\[ 0 \rightarrow  \cO_X(D) \rightarrow   E  \rightarrow
 \cO_X(c_{1}-D)\otimes I_{Z} \rightarrow 0  \]
where $D$ is a divisor with $2D-c_{1}\equiv \xi $ and $Z$ is a
locally complete intersection 0-cycle of length $c_{2}+(\xi
^{2}-c_{1}^{2})/4$. Moreover, we require that $E$ is not given by
the trivial extension when $\xi ^{2}=c_{1}^{2}-4c_{2}$.
\end{definition}

\vspace{3mm}
\begin{remark}
\label{Qin}  By \cite{Qin93}; Theorem 1.2.5, if $L_{1}$ and
$L_{2}$ are two ample divisors on $X$ and $E$ is a rank 2 vector
bundle on $X$ which is $L_{1}$-stable but $L_{2}$-unstable, then
we have $E \in \cE_{\xi }(c_{1},c_{2})$ where $\xi $ defines a
non-empty wall of type $(c_{1},c_{2})$ separating $L_{1}$ and
$L_{2}$ (i.e. $\xi L_{1}<0<\xi L_{2}$; moreover, we can consider
the ample divisor $L:=(\xi L_{2})L_{1}-(\xi L_{1})L_{2}$ on $X$
and we have $L \xi =0$). More can be said, by \cite{Qin93};
Theorem 1.3.3, given $L_1$ and $L_2$ two polarizations lying on
chambers $C_1$ and $C_2$, sharing a common wall,
 we have
 \refstepcounter{equation} \hspace{4mm}
 \[  (\theequation) \label{ratse1} \hspace{4mm}
 M_{L_1}(2;c_{1},c_2)=(M_{L_2}(2;c_{1},c_2) \setminus  \sqcup_{\xi}
 \cE_{\xi }(c_{1},c_2)) \sqcup ( \sqcup_{\xi}\cE_{-\xi}(c_1,c_2)),
   \]
where $\xi$ satisfies $\xi L_1>0$ and runs over all numerical
equivalence classes which define the common wall $\cW$.
\end{remark}

\vspace{3mm}  In the next result, we have summarized  well-known
properties of some moduli spaces of stable rank two vector bundles
on projective surfaces that we will need later on (see for
instance \cite{JPAA}; Proposition 3.11).

\begin{proposition} \label{moduli} Let $X$ be a smooth, projective
rational surface with effective anticanonical line bundle and let
$H$ be an ample line bundle on $X$. Then, the moduli space
$M_{X,H}(2;c_1,c_2)$ of rank two, $H$-stable vector bundles $E$ on
$X$ with fixed Chern classes $c_i(E)=c_i$ is either empty or a
smooth irreducible variety of dimension
\[\dim M_{X,H}(2;c_1,c_2)= 4c_2- c_1^2-3. \]
\end{proposition}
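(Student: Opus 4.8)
The plan is to treat the three conclusions—smoothness, the dimension formula, and irreducibility—in turn, the first two by deformation theory and Hirzebruch--Riemann--Roch, the last (the real difficulty) by a degeneration argument. I would begin by recalling that for an $H$-stable, hence simple, bundle $E$ the deformation theory of the moduli functor at $[E]$ is controlled by the groups $\Ext^i(E,E)$: the Zariski tangent space is $\Ext^1(E,E)$ and the obstructions live in $\Ext^2(E,E)$. So smoothness of $M_{X,H}(2;c_1,c_2)$ and the identity $\dim_{[E]}M = \ext^1(E,E)$ both follow at once from a single vanishing statement, $\Ext^2(E,E)=0$ for every $E\in M_{X,H}(2;c_1,c_2)$. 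This is the first thing I would prove.

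To get that vanishing I would use Serre duality, $\Ext^2(E,E)\cong \Hom(E,E\otimes K_X)^{*}$, and reduce to showing $\Hom(E,E\otimes K_X)=0$. Both $E$ and $E\otimes K_X$ are $H$-stable of rank two, and since $c_1(E\otimes K_X)=c_1(E)+2K_X$ we have $\mu_H(E\otimes K_X)=\mu_H(E)+K_X\cdot H$. Here the hypothesis on $X$ enters: $-K_X$ is effective and nonzero (nonzero because $\Pic(X)$ is torsion-free and $K_X\ne 0$ on a rational surface), so for the ample class $H$ one has $(-K_X)\cdot H>0$, i.e.\ $K_X\cdot H<0$. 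Hence $\mu_H(E\otimes K_X)<\mu_H(E)$ strictly. A nonzero homomorphism from a stable sheaf into a stable sheaf of strictly smaller slope is impossible (the image is a quotient of $E$, so its slope exceeds $\mu_H(E)$, yet as a subsheaf of $E\otimes K_X$ its slope is at most $\mu_H(E\otimes K_X)$), so $\Hom(E,E\otimes K_X)=0$ and therefore $\Ext^2(E,E)=0$.

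With $\ext^0(E,E)=1$ by stability and $\ext^2(E,E)=0$, the local dimension equals $\ext^1(E,E)=1-\chi(E,E)$. I would then compute $\chi(E,E)=\int_X\operatorname{ch}(E^{*})\operatorname{ch}(E)\operatorname{td}(X)$ by Hirzebruch--Riemann--Roch; the degree-two part of $\operatorname{ch}(E^{*})\operatorname{ch}(E)$ is $c_1^2-4c_2$ with vanishing degree-one part, which gives $\chi(E,E)=c_1^2-4c_2+4\chi(\cO_X)$. Substituting $\chi(\cO_X)=1$, valid for any rational surface, yields
\[ \dim M_{X,H}(2;c_1,c_2)=1-\chi(E,E)=4c_2-c_1^2-3, \]
as asserted. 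I would also remark that, $X$ being rational, $h^1(\cO_X)=0$, so fixing $c_1\in H^2(X,\ZZ)$ is equivalent to fixing $\det E$ and no Picard-variety correction is needed.

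The one genuinely hard point is \emph{irreducibility}, since smoothness and dimension are local while irreducibility is global. The approach I would take is to show that any two points of $M_{X,H}(2;c_1,c_2)$ lie in a common irreducible family. Concretely, I would exploit the effective anticanonical divisor and the Serre/elementary-transformation correspondence (Hecke modifications at points) to connect bundles with a given $c_2$ to those with smaller $c_2$, reducing to a base case of small $c_2$ that can be analyzed by explicit extension constructions as in Definition~\ref{E:xi}; combined with the known structural results of Qin (\cite{Qin93}) for rational surfaces this forces a single irreducible component. This inductive connectedness step is where essentially all the work lies, and it is exactly the content packaged in the cited \cite{JPAA}; Proposition~3.11, on which I would ultimately rely.
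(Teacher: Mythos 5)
Your proposal is correct, but it is considerably more of a proof than the paper itself offers: the paper states Proposition \ref{moduli} purely as a summary of known results and refers to \cite{JPAA}, Proposition 3.11, giving no argument at all. Your deformation-theoretic treatment of smoothness and the dimension formula is sound and makes visible exactly where each hypothesis enters. The vanishing $\Ext^2(E,E)\cong\Hom(E,E\otimes K_X)^*=0$ does follow as you say: $-K_X$ effective and nonzero gives $K_X\cdot H<0$, so $\mu_H(E\otimes K_X)<\mu_H(E)$ and a nonzero map between the two stable bundles is impossible (your slope argument covers a rank-one image; for a rank-two image the map is injective and the torsion quotient has effective determinant $2K_X$, again contradicting $K_X\cdot H<0$). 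The Riemann--Roch computation $\chi(E,E)=c_1^2-4c_2+4\chi(\cO_X)$ together with $\chi(\cO_X)=1$ and $\dim\Ext^1(E,E)=1-\chi(E,E)$ then gives $4c_2-c_1^2-3$, and since $h^2(\cO_X)=0$ there is no discrepancy between the traceless and full obstruction spaces. For irreducibility your sketch (elementary transformations reducing $c_2$, plus Qin's wall-crossing structure \cite{Qin93}) is only an outline, not a proof, but you correctly flag it as the genuinely global difficulty and ultimately anchor it on the same citation \cite{JPAA} on which the paper rests the entire statement; so your proposal is no less complete than the paper's treatment, and strictly more informative. What the paper's approach buys is economy, since this proposition is background for its Brill--Noether applications; what yours buys is transparency about the role of the hypotheses --- the effective anticanonical class kills obstructions, and rationality supplies $\chi(\cO_X)=1$, which is where the $-3$ comes from.
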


\vspace{3mm} For any integer $e \geq 0$, let $X=\FF_e \cong
\PP(\cO_{\PP^1} \oplus \cO_{\PP^1}(-e))$ be a non singular,
Hirzebruch surface. We denote by $C_0$ and $F$ the standard basis
of $Pic(X) \cong \ZZ \oplus \ZZ$ such that $C_0^2=-e$,  $F^2=0$
and  $C_{0}F=1$. The canonical divisor is given by
          \[ K_{X}=-2C_0-(e+2)F \]
and it is well known that a divisor $L=aC_0+bF$ on $X$ is ample
if, and only if, it is very ample, if and only if, $a>0$ and
$b>ae$, and that $D=a'C_0+b'F$ is effective if and only if $a'
\geq 0$ and $b' \geq 0$ (\cite{hart}; V, Corollary 2.18).

\vspace{3mm}

Given an integer $c_2>0$ and $\alpha \in \{0,1 \}$, we denote by
$M_L(2;C_0+\alpha F,c_2)$ the moduli space of rank two, $L$-stable
vector bundles $E$ on $X$ with fixed Chern classes
$c_1(E)=C_0+\alpha F$ and $c_2(E)=c_2$. For any integer $n$, $1
\leq n \leq c_2-1$, consider the following ample divisor on $X$
\[ L_n:=C_0+(e+2c_2-\alpha-2n+1)F.\]
Notice that the equivalence class
\[ \xi_n:= C_0-(2c_2-\alpha-2n)F \]
defines a non-empty wall $\cW^{\xi_n}$ of type $(C_0+ \alpha
F,c_2)$ which separates the ample divisors $L_{n}$ and $L_{n+1}$.
Indeed, $\xi_{n}^2=-e-2(2c_2-\alpha-2n)<0$,
$\xi_n+c_1=2(C_0-(c_2-\alpha-n)F)$ is divisible by two in
$Pic(X)$, $c_2 + \frac{\xi_n^2-c_1^2}{4}=n>0$ and
\[ L_{n+1}\xi_n=-1<0< 1= L_{n}\xi_n.\]
We are led to pose the following problem

\begin{problem} Determine the difference between the moduli spaces
\[ M_{L_n}(2;C_0+\alpha F,c_2) \quad \mbox{and} \quad  M_{L_{n+1}}(2;C_0+\alpha F,c_2).\]
\end{problem}

\vspace{3mm}
\begin{remark}
Given an ample line bundle $L=aC_0 + bF$ on $X$, we can represent
$L$ as a point of coordinates $(a,b)$ in the plane. The following
picture gives us an idea of the situation we are discussing:

\vspace{20mm}

\begin{picture}(180,160)(-100,-10)
\put(0,0){\line(0,1){160}}
 \put(0,0){\line(1,2){70}}
  \put(0,0){\line(2,1){170}}
\put(0,0){\line(1,1){130}}
 \put(0,0){\line(1,0){190}}
\put(-8,150){\makebox(0,0){$C_0$}}
\put(85,115){\makebox(0,0){$\bullet L_n$}}
\put(140,140){\makebox(0,0){$\cW^{\xi_n}$}}
\put(130,90){\makebox(0,0){$\bullet L_{n+1} $}}
\put(200,-3){\makebox(0,0){$F$}}
\end{picture}

\vskip 5mm

\end{remark}

\vspace{3mm}

Next result completely solves this problem. We are going to prove
that these differences are surprisingly controlled by the
following two Brill-Noether loci: \[
W^1_{L_{n+1}}(2;\bar{c_1},n)\subset
M_{L_{n+1}}(2;\bar{c_1},n)\quad \mbox{and} \quad
W^1_{L_{n}}(2;\tilde{c_1},n) \subset M_{L_{n}}(2;\tilde{c_1},n)\]
where $\bar{c_1}=-C_0+(2c_2-\alpha -2n)F$ and
$\tilde{c_1}=C_0+(\alpha+2n-2c_2)F$.

\vspace{3mm}

\begin{theorem} Let $X=\FF_e$ be a smooth Hirzebruch surface, $c_2 >1$ an integer and $\alpha \in \{0,1 \}$. For any integer $n$, $1
\leq n \leq c_2-1$, consider the ample divisor
$L_n:=C_0+(e+2c_2-\alpha-2n+1)F$. Then
\[ M_{L_n}(2;C_0+\alpha F,c_2) \cong (M_{L_{n+1}}(2;C_0+\alpha F,c_2) \setminus W^1_{L_{n+1}}(2;\bar{c_1},n))
\sqcup W^1_{L_{n}}(2;\tilde{c_1},n).\]  Moreover, the
Brill-Noether loci \[ W^1_{L_{n+1}}(2;\bar{c_1},n)\subset
M_{L_{n+1}}(2;\bar{c_1},n)\quad \mbox{and} \quad
W^1_{L_{n}}(2;\tilde{c_1},n) \subset M_{L_{n}}(2;\tilde{c_1},n)\]
do have the expected dimension $\rho_{L_{n+1}}^1(2;\bar{c_1},n)$
and $\rho_{L_n}^1(2;\tilde{c_1},n)$, respectively.
\end{theorem}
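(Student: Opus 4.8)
The plan is to derive the isomorphism from Qin's wall-crossing description (\ref{ratse1}) applied to $L_1=L_n$ and $L_2=L_{n+1}$, whose chambers share the wall $\cW^{\xi_n}$, and then to identify the two elementary modifications $\cE_{\pm\xi_n}(C_0+\alpha F,c_2)$ with the two Brill--Noether loci by a single twist. Writing $c_1:=C_0+\alpha F$, the divisor with $2D-c_1\equiv\xi_n$ is $D=C_0+(\alpha+n-c_2)F$ and the one with $2D'-c_1\equiv-\xi_n$ is $D'=(c_2-n)F$. Using $C_0^2=-e$, $C_0F=1$, $F^2=0$ and $c_2(E\otimes L)=c_2(E)+c_1(E)c_1(L)+c_1(L)^2$ for a rank two bundle, a direct computation shows that $-\otimes\cO_X(-D)$ sends invariants $(c_1,c_2)$ to $(\bar{c_1},n)$ and $-\otimes\cO_X(-D')$ sends them to $(\tilde{c_1},n)$, the new second Chern class being exactly $n=c_2+(\xi_n^2-c_1^2)/4$ in both cases.

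The core of the argument is the identification $\cE_{\xi_n}(c_1,c_2)\cong W^1_{L_{n+1}}(2;\bar{c_1},n)$ (and, symmetrically, $\cE_{-\xi_n}(c_1,c_2)\cong W^1_{L_n}(2;\tilde{c_1},n)$). Given $E\in\cE_{\xi_n}(c_1,c_2)$, tensoring its defining sequence $0\to\cO_X(D)\to E\to\cO_X(c_1-D)\otimes I_Z\to0$ by $\cO_X(-D)$ yields $0\to\cO_X\to E\otimes\cO_X(-D)\to\cO_X(\bar{c_1})\otimes I_Z\to0$; since twisting by a fixed line bundle preserves $L_{n+1}$-stability, $E\otimes\cO_X(-D)$ lies in $M_{L_{n+1}}(2;\bar{c_1},n)$ and carries a section, hence lies in $W^1_{L_{n+1}}(2;\bar{c_1},n)$. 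Conversely, if $E'\in W^1_{L_{n+1}}(2;\bar{c_1},n)$, a nonzero section has no divisorial zeros (an effective $\Gamma\hookrightarrow E'$ would satisfy $\Gamma L_{n+1}<\mu_{L_{n+1}}(E')$, forcing $\Gamma L_{n+1}=0$ and hence $\Gamma=0$ by ampleness), so it produces $0\to\cO_X\to E'\to\cO_X(\bar{c_1})\otimes I_Z\to0$ with $\ell(Z)=c_2(E')=n$; twisting back by $\cO_X(D)$ returns an element of $\cE_{\xi_n}(c_1,c_2)$, the sub-line bundle $\cO_X(D)$ being $L_n$-destabilising but not $L_{n+1}$-destabilising because $\xi_nL_n=1>0>-1=\xi_nL_{n+1}$. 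Moreover $h^0(\cO_X(\bar{c_1}))=0$ (the class $\bar{c_1}=-C_0+(2c_2-\alpha-2n)F$ has negative $C_0$-coefficient, hence is non-effective by the effectivity criterion on $\FF_e$), so every such $E'$ satisfies $h^0(E')=1$; the same holds for $W^1_{L_n}(2;\tilde{c_1},n)$ since $\tilde{c_1}\equiv\xi_n=C_0-(2c_2-\alpha-2n)F$ is non-effective. Substituting these two identifications into (\ref{ratse1}) gives the displayed isomorphism.

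For the dimension statement I would compare $\rho^1$ with an explicit count. Proposition \ref{moduli} applies, since $\FF_e$ is rational with effective $-K_X=2C_0+(e+2)F$, giving $\dim M_{L_{n+1}}(2;\bar{c_1},n)=4n-\bar{c_1}^2-3$, and Riemann--Roch gives $\chi(2;\bar{c_1},n)=1-n$, whence $\rho^1_{L_{n+1}}(2;\bar{c_1},n)=e+4c_2-2\alpha-n-3$. On the other hand, because every bundle of $W^1_{L_{n+1}}(2;\bar{c_1},n)$ has exactly one section, the locus fibres over $Hilb^n(X)$ with fibre $\PP(\Ext^1(\cO_X(\bar{c_1})\otimes I_Z,\cO_X))$, so its dimension is $2n+\dim\Ext^1(I_Z,\cO_X(\xi_n))-1$. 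The vanishings $h^0(\cO_X(\xi_n))=h^2(\cO_X(\xi_n))=0$ (again from the effectivity criterion on $\FF_e$) reduce the long exact sequence of $0\to I_Z\to\cO_X\to\cO_Z\to0$ to $\dim\Ext^1(I_Z,\cO_X(\xi_n))=h^1(\cO_X(\xi_n))+n$, and Riemann--Roch for $h^1(\cO_X(\xi_n))$ makes this equal to $\rho^1_{L_{n+1}}(2;\bar{c_1},n)$. Combined with the universal inequality $\dim W^1\geq\rho^1$ from Corollary \ref{superficie}, equality follows. The identical computation for $W^1_{L_n}(2;\tilde{c_1},n)$ yields $\rho^1_{L_n}(2;\tilde{c_1},n)=3n-1$, again matching the count.

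The step I expect to be the real obstacle is justifying that the disjoint unions in (\ref{ratse1}) collapse to the single pair $\pm\xi_n$. A priori $\xi$ ranges over all classes defining the wall $\cW^{\xi_n}$, that is over the integral multiples $k\xi_n$; the $2$-divisibility of $\cO_X(k\xi_n+c_1)$ forces $k$ odd, but each odd $k$ still gives a candidate modification $\cE_{\pm k\xi_n}(c_1,c_2)$ parametrised by a cycle of length $n+(k^2-1)\xi_n^2/4$. One must show these are empty for $|k|\geq3$, which amounts to the numerical inequality $n+(k^2-1)\xi_n^2/4<0$, together with the separate verification that no wall $\cW^{\eta}$ with $\eta$ not proportional to $\xi_n$ meets the segment joining $L_n$ to $L_{n+1}$. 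Since $\xi_n^2=-e-2(2c_2-\alpha-2n)$, the $k=3$ case is the binding one and the condition reads $9n<2e+8c_2-4\alpha$; controlling this uniformly over the allowed range $1\leq n\leq c_2-1$ — and, should it threaten to fail for $n$ close to $c_2$, pinning down the precise hypotheses under which the clean two-term statement persists — is the delicate numerical heart of the proof, and the place where the explicit choice of the polarizations $L_n$ must be used essentially.
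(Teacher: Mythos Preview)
Your plan mirrors the paper's proof closely: apply Qin's wall-crossing formula (\ref{ratse1}), identify $\cE_{\xi_n}(c_1,c_2)$ and $\cE_{-\xi_n}(c_1,c_2)$ with the Brill--Noether loci $W^1_{L_{n+1}}(2;\bar{c_1},n)$ and $W^1_{L_n}(2;\tilde{c_1},n)$ via the twists $\otimes\,\cO_X(-D)$ and $\otimes\,\cO_X(-D')$, and then check the dimension statement by computing $\dim\cE_{\pm\xi_n}$ directly and matching it against $\rho^1$ via Riemann--Roch and Proposition~\ref{moduli}. Your argument that a nonzero section of $E'\in W^1$ has no divisorial zero locus (an effective $\Gamma$ with $\cO_X(\Gamma)\hookrightarrow E'$ would satisfy $\Gamma\cdot L\le 0$ by stability, hence $\Gamma=0$ by ampleness) is a clean variant of the paper's, which instead writes out the maximal effective divisor $aC_0+bF$ contained in the zero scheme and uses $L$-stability to force $a=b=0$.

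The step you flag as the real obstacle is exactly what the paper isolates as its Claim~1, and the paper's argument is precisely the numerical one you anticipate: any class defining the hyperplane $\cW^{\xi_n}$ is of the form $\sigma\xi_n$, and then the length condition $c_2+(\sigma^2\xi_n^2-c_1^2)/4\ge 0$ is asserted to force $\sigma=\pm1$. The paper does \emph{not} separately address walls $\cW^\eta$ with $\eta$ non-proportional to $\xi_n$; it simply invokes Qin's formula for chambers sharing a common wall and restricts attention to classes on that wall. Your instinct that the binding inequality $9n<2e+8c_2-4\alpha$ (the $|\sigma|=3$ case) is not automatic across the full range $1\le n\le c_2-1$ is correct: for instance on $\FF_0$ with $\alpha=0$ and $n=c_2-1$ it reduces to $c_2<9$. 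So the point you single out is genuinely where the paper's argument is most compressed, and where your own write-up should be most explicit.
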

\begin{proof} We have already seen that the numerical class
\[ \xi_n:= C_0-(2c_2-\alpha-2n)F \]
defines a non-empty wall $\cW^{\xi_n}$ of type $(C_0+ \alpha
F,c_2)$ which separates the ample divisors
$$L_{n}=C_0+(e+2c_2-\alpha-2n+1)F \quad \mbox{and}\quad  L_{n+1}=C_0+(e+2c_2-\alpha-2n-1)F.$$
Hence, by  Remark \ref{Qin} we have:
 \begin{equation}
  \label{des1} \hspace{4mm}
 M_{L_n}(2;C_0+ \alpha F,c_2)=(M_{L_{n+1}}(2;C_0+ \alpha F,c_2) \setminus  \sqcup_{\xi}
 \cE_{\xi }(c_{1},c_2)) \sqcup ( \sqcup_{\xi}\cE_{-\xi}(c_1,c_2)),
   \end{equation}
where $\xi$ satisfies $\xi L_n > 0$ and runs over all numerical
equivalence classes which define the common wall $\cW^{\xi_n}$
separating $L_{n}$ and $L_{n+1}$.

\noindent {\bf Claim 1:} $\xi_n$ is the only equivalence class
which defines the common wall $\cW^{\xi_n}$ and verifies $\xi_n
L_n> 0$.

\noindent {\bf Proof of Claim 1:} Let $L=aC_0+bF \in \cW^{\xi_n}$
be an ample divisor lying on the common wall defined by $\xi_n$.
By definition,
$$0=L  \xi_n=-ae-a(2c_2-\alpha-2n)+b$$ and thus
$L=aC_0+a(2c_2-\alpha-2n+e)$. Assume there exists a numerical
equivalence class $\xi=\sigma C_0+ \gamma F$ defining the common
wall $\cW^{\xi_n}$ and such that $\xi  L_n> 0$. In particular,
since $L=C_0+(2c_2-\alpha-2n+e) \in \cW^{\xi_n}$, we have
\[ 0=\xi L= (\sigma C_0+ \gamma F)(C_0+(2c_2-\alpha-2n+e)F)
=-\sigma e + \sigma (2c_2-\alpha-2n+e)+\gamma \] and  hence
$\gamma=-\sigma (2c_2-\alpha-2n)$.  On the other hand, since $\xi$
defines a non-empty wall
\[ 0 \leq c_2 + \frac{\xi^2-c_1^2}{4}=c_2-\frac{\sigma^2}{2}(2c_2-\alpha-2n) +\frac{e}{4}(1-\sigma^2)-\frac{\alpha}{2} \]
which gives us $\sigma= \pm 1$. Finally, $\xi L_n > 0$ implies
$\sigma = 1$ and thus $\xi=C_0-(2c_2-\alpha-2n)F=\xi_n$ which
proves Claim 1.

\vspace{3mm} Applying Claim 1, equation (\ref{des1}) turns out to
be
\begin{equation}
  \label{des2} \hspace{4mm}
 M_{L_n}(2;C_0+ \alpha F,c_2)=(M_{L_{n+1}}(2;C_0+ \alpha F,c_2) \setminus
 \cE_{\xi_n }(c_{1},c_2)) \sqcup \cE_{-\xi_n}(c_1,c_2).
   \end{equation}

Notice that
\[\begin{array}{ll}\tilde{c_1}L_n & =  (C_0+(\alpha+2n-2c_2)F)(C_0+(e+2c_2-\alpha-2n+1)F
) \\ & = 1\\ &  > 2(-4c_2+4n+2 \alpha -e-4)
\\ & = 2(-2C_0-(e+2)F)(C_0+(e+2c_2-\alpha-2n+1)F )\\ & = 2 K_X
L_n \end{array} \] and
\[\begin{array}{ll}\bar{c_1}L_{n+1} & =(-C_0+(2c_2-\alpha-2n)F)(C_0+(e+2c_2-\alpha-2n-1)F
)\\ & = 1 \\ &  > 2(-4c_2+4n+2 \alpha -e)
\\ & =
2(-2C_0-(e+2)F)(C_0+(e+2c_2-\alpha-2n-1)F ) \\ & =2 K_X L_{n+1}.
\end{array} \] Thus by Corollary \ref{superficie} the
Brill-Noether stratification of the moduli spaces
$M_{L_n}(2;\tilde{c_1},n)$ and $M_{L_{n+1}}(2;\bar{c_1},n)$ are
defined.

\noindent {\bf Claim 2:} We have: \[  (a) \quad \cE_{-\xi_n} \cong
W^1_{L_{n}}(2;\tilde{c_1},n), \]
\[(b) \quad \cE_{\xi_n} \cong  W^1_{L_{n+1}}(2;\bar{c_1},n). \]

\noindent {\bf Proof of Claim 2:} First of all notice that Claim 2
is, by definition, equivalent to have
\[  (a') \quad
\cE_{-\xi_n} \cong \{G \in M_{L_n}(2;C_0+ \alpha F,c_2)|
h^0(G(-(c_2-n)F))>0 \},\]
\[ (b') \quad \cE_{\xi_n} \cong \{G \in M_{L_{n+1}}(2;C_0+ \alpha F,c_2)|
h^0(G(-C_0+(c_2-n-\alpha )F))>0 \}. \] Let us prove $(a')$. If $E
\in \cE_{-\xi_n}$, then $E$ is given by a non-trivial extension
\[ 0 \rightarrow  \cO_X((c_2-n)F) \rightarrow   E  \rightarrow
 \cO_X(C_0-(c_2-n-\alpha)F)\otimes I_{Z} \rightarrow 0  \]
where $Z$ is a $0$-dimensional scheme of length
$|Z|=c_2(E(-(c_2-n)F))=n$. Therefore $h^0(E(-(c_2-n)F))>0$.
Moreover, it follows from (\ref{des2}) that $\cE_{-\xi_n} \subset
M_{L_n}(2;C_0+ \alpha F,c_2)$. Now let us see the converse. Let $E
\in \{G \in M_{L_n}(2;C_0+ \alpha F,c_2)| h^0(G(-(c_2-n)F))>0 \}$
and we are going to see that $E \in \cE_{-\xi_n}$. Let $s$ be a
non-zero section of $E(-(c_2-n)F)$ and let $Y$ be its scheme of
zeros. Let $D$ be the maximal effective divisor contained in $Y$.
Then $s$ can be regarded as a section of $E(-(c_2-n)F-D)$ and its
scheme of zeros has codimension $\geq 2$. Thus, for some effective
divisor $D=aC_0+bF$ we have a short exact sequence
\[ 0 \rightarrow  \cO_X((c_2-n)F+D) \rightarrow   E  \rightarrow
 \cO_X(C_0-(c_2-n-\alpha)F-D)\otimes I_{Z} \rightarrow 0.  \]
 By assumption, $E$ is $L_n$-stable. Therefore,
 \[ ((c_2-n)F+D)L_n < \frac{c_1(E)L_n}{2}=\frac{(C_0+ \alpha F)L_n}{2},\]
 which is equivalent to $a(2c_2-\alpha-2n+1)+b \leq 0$. Since $D$
 is an effective divisor $a,b \geq 0$ and hence the only solution
 is $a=b=0$. Therefore $D=0$ and $E$ is given by the exact
 sequence
 \[ 0 \rightarrow  \cO_X((c_2-n)F) \rightarrow   E  \rightarrow
 \cO_X(C_0-(c_2-n-\alpha)F)\otimes I_{Z} \rightarrow 0  \]
which proves that $E \in \cE_{-\xi_n}$.

Let us now prove $(b')$. By Remark  (\ref{des2}),  $\cE_{\xi_n}
\subset M_{L_{n+1}}(2;C_0+ \alpha F,c_2)$ and since any  $E \in
\cE_{\xi_n}$ is given by a non-trivial extension of type
\[ 0 \rightarrow  \cO_X(C_0-(c_2-n-\alpha)F) \rightarrow   E  \rightarrow
 \cO_X((c_2-n)F)\otimes I_{Z} \rightarrow 0,  \]
 for any $E \in \cE_{\xi_n}$, we have $h^0(E(-C_0+(c_2-n-\alpha )F))>0$.  Conversely,
given  $$E \in \{G \in M_{L_{n+1}}(2;C_0+ \alpha F,c_2)|
 h^0(G(-C_0+(c_2-n-\alpha )F))>0\}$$ we are going to see that $E \in \cE_{\xi_n}$. Let $s$ be
a non-zero section of $E(-C_0+(c_2-n-\alpha )F)$ and let $Y$ be
its scheme of zeros. Let $D$ be the maximal effective divisor
contained in $Y$. Then $s$ can be regarded as a section of
$E(-C_0+(c_2-n-\alpha )F-D)$ and its scheme of zeros has
codimension $\geq 2$. Thus, for some effective divisor $D=aC_0+bF$
we have a short exact sequence
\[ 0 \rightarrow  \cO_X(C_0-(c_2-n-\alpha)F+D) \rightarrow   E  \rightarrow
 \cO_X((c_2-n)F-D)\otimes I_{Z} \rightarrow 0.  \]
 By assumption, $E$ is $L_{n+1}$-stable. Therefore,
 \[ (C_0-(c_2-n-\alpha)F+D)L_{n+1} < \frac{c_1(E)L_{n+1}}{2}=\frac{(C_0+ \alpha F)L_{n+1}}{2},\]
 which is equivalent to $a(2c_2-\alpha-2n-1)+b \leq 0$. Since $D$
 is an effective divisor this implies that $a=b=0$ and thus $E$ is given by the exact
 sequence
 \[ 0 \rightarrow  \cO_X(C_0-(c_2-n-\alpha)F) \rightarrow   E  \rightarrow
 \cO_X((c_2-n)F)\otimes I_{Z} \rightarrow 0  \]
which proves that $E \in \cE_{\xi_n}$.

\vspace{3mm}

Putting together Claim 2 and (\ref{des2}) we obtain
\[ M_{L_n}(2;C_0+\alpha F,c_2) \cong (M_{L_{n+1}}(2;C_0+\alpha F,c_2) \setminus W^1_{L_{n+1}}(2;\bar{c_1},n))
\sqcup W^1_{L_{n}}(2;\tilde{c_1},n).\]

Finally let us see that the dimensions of the Brill-Noether loci
$W^1_{L_{n}}(2;\tilde{c_1},n)$ and $W^1_{L_{n+1}}(2;\bar{c_1},n)$
coincide with the expected one. To this end, by Claim 2, it
suffices to prove that \[ (i) \quad
\dim(\cE_{-\xi_n})=\rho_{L_{n}}^1(2;\tilde{c_1},n),
\]
\[(ii) \quad \dim(\cE_{\xi_n})= \rho_{L_{n+1}}^1(2;\bar{c_1},n). \]
By construction,
\[\begin{array}{ll}\dim \cE_{-\xi_n} & = ext^1(I_Z(C_0-(c_2-n-\alpha)F),\cO_X((c_2-n)F))+2|Z|-
h^0E(-(c_2-n)F) \\ & = h^1(I_Z(-C_0-(2c_2-2n-\alpha+e+2)F))+2|Z|-
h^0E(-(c_2-n)F)
\end{array} \]
being $E \in \cE_{-\xi_n}$. Since
$h^i(\cO_X(-C_0-(2c_2-2n-\alpha+e+2)F))=0$, for $i=0,1$, from the
cohomological exact sequence associated to
\[0 \rightarrow I_Z(-C_0-(2c_2-2n-\alpha+e+2)F) \rightarrow \cO_X(-C_0-(2c_2-2n-\alpha+e+2)F)\]\[ \rightarrow
\cO_Z(-C_0-(2c_2-2n-\alpha+e+2)F)\rightarrow 0 \] we deduce that
$$h^1(I_Z(-C_0-(2c_2-2n-\alpha+e+2)F))=h^0(\cO_Z(-C_0-(2c_2-2n-\alpha+e+2)F))=|Z|.$$
For any $E \in \cE_{-\xi_n}$, we have $h^0E(-(c_2-n)F)=1$ and
$c_2(E(-(c_2-n)F))=n=|Z|$. Putting altogether we get
\[ \dim \cE_{-\xi_n}=3n-1.\]
On the other hand, by Riemann-Roch theorem,
\[\begin{array}{ll} \chi(2;\tilde{c_1},n) &
=2 +\frac{(\tilde{c_1})(2C_0+(e+2)F)}{2}+\frac{(\tilde{c_1})^2}{2}-n \\
& =2
+\frac{(C_0+(\alpha+2n-2c_2)F)(2C_0+(e+2)F)}{2}+\frac{(C_0+(\alpha+2n-2c_2)F)^2}{2}-n
\\ &=3n-4c_2+2 \alpha -e +3, \end{array}\]
and by Proposition \ref{moduli} $\dim
M_{L_n}(2;\tilde{c_1},n)=4c_2-(2 \alpha-e)-3$.  Therefore,
\[\rho_{L_{n}}^1(2;\tilde{c_1},n)=\dim M_{L_n}(2;\tilde{c_1},n)-1(1-\chi(2,\tilde{c_1},n))=3n-1 \]
which proves $(i)$.

Let us now prove $(ii)$. By construction,
\[\begin{array}{ll}\dim \cE_{\xi_n} & = ext^1(I_Z((c_2-n)F),\cO_X(C_0-(c_2-n-\alpha)F)+2|Z|-
h^0E(-C_0+(c_2-n-\alpha)F) \\ &=
h^1(I_Z(-3C_0+(2c_2-2n-\alpha-e-2)F))+2|Z|-h^0E(-C_0+(c_2-n-\alpha)F)
\end{array} \]
being $E \in \cE_{\xi_n}$. Since
$h^i(\cO_X(-3C_0+(2c_2-2n-\alpha-e-2)F))=0$, for $i=0,2$,
$h^1(\cO_X(-3C_0+(2c_2-2n-\alpha-e-2)F))  =
-\chi(\cO_X(-3C_0+(2c_2-2n-\alpha-e-2)F))=-\chi$ and by
Riemann-Roch theorem
$$ \begin{array}{ll}  \chi
&=1+\frac{(-3C_0+(2c_2-2n-\alpha-e-2)F)(2C_0+(e+2)F)}{2}+\frac{(-3C_0+(2c_2-2n-\alpha-e-2)F)^2}{2}
\\ &= 4c_2-4n-2 \alpha-2+e. \end{array}$$
From the cohomological exact sequence associated to
\[0 \rightarrow I_Z(-3C_0+(2c_2-2n-\alpha-e-2)F) \rightarrow
\cO_X(-3C_0+(2c_2-2n-\alpha-e-2)F)\] \[ \rightarrow
\cO_Z(-3C_0+(2c_2-2n-\alpha-e-2)F)\rightarrow 0
\] we deduce that
$$\begin{array}{ll}h^1(I_Z(-3C_0+(2c_2-2n-\alpha-e-2)F)) &=
h^0(\cO_Z(-C_0-(2c_2-2n-\alpha+e+2)F))
\\ & +h^1(\cO_X(-3C_0+(2c_2-2n-\alpha-e-2)F))
\\ &=|Z|+4c_2-4n-2 \alpha-2+e. \end{array}$$ For any $E \in \cE_{\xi_n}$, we have $h^0E(-C_0+(c_2-n-\alpha)F)=1$ and
$c_2(E(-C_0+(c_2-n-\alpha)F)=n=|Z|$. Putting altogether we get
\[ \dim \cE_{\xi_n}=4c_2-n+e-2\alpha-3.\]
On the other hand, by Riemann-Roch theorem,
\[\chi(2;\bar{c_1},n)= 2+ \frac{(\bar{c_1})(2C_0+(e+2)F)}{2}+\frac{\bar{c_1}^2}{2}-n=1-n, \]
and by Proposition \ref{moduli} $\dim
M_{L_n}(2;\bar{c_1},n)=4c_2-(2 \alpha-e)-3$.  Thus,
\[\rho_{L_{n+1}}^1(2;\bar{c_1},n)=\dim M_{L_{n+1}}(2;\bar{c_1},n)-1(1-\chi(2;\bar{c_1},n))= 4c_2-n+e-2\alpha-3\]
which proves $(ii)$.
\end{proof}

\section{Brill-Noether loci and Instanton bundles}

In this section, we will address the three problems posed in
section two for the case of mathematical instanton bundles on
$\PP^3$. More precisely, we will prove that the Brill-Noether
locus of instanton bundles on $\PP^3$ is non-empty if and only if
the corresponding generalized Brill-Noether number is non-negative
and in this case the Brill-Noether locus is a smooth irreducible
variety of the expected dimension.

\vspace{3mm}

Let $MI(n)$ be the moduli space of mathematical instanton bundles
$E$ over $\PP^3$ with $c_1(E)=2$ and $c_2(E)=n$ and verifying the
instanton condition $H^1(E(-3))=0$. It is known that $MI(n)$ is
non-singular and irreducible for $n \leq 6$ (see \cite{B1} for
$n=2$, \cite{H2} for $n=3$, \cite{ES} for $n=4$, \cite{B2} and
\cite{LP} for $n=5$ and \cite{CTT} for $n=6$) and it has been
conjectured that $MI(n)$ is non-singular and irreducible. The only
known component $MI_0(n)$ of $MI(n)$ is the one made of vector
bundles which are generalizations of (a twist of) the ones
associated to $n+1$ skew lines in $\PP^3$ ('t Hooft bundles) and
we know that $MI_0(n)$ is a generically smooth variety of
dimension $8n-11$.

\vspace{3mm}

Since for any instanton bundle $E\in MI_0(n)$, we have
$H^{i}(\PP^3, E)=0$ for $i\ge 2$, Theorem \ref{construccio}
applies and the Brill-Noether stratification of $MI_0(n)$ is well
defined. So, for any $k\ge 1$, we can study the Brill-Noether loci
\[ W^k:= \{E \in MI_0(n) | h^0(E) \geq k \} \subset MI_0(n).\]
\noindent We have:

\begin{proposition}
\label{instanton} Assume $n>13$. With the above notations we have
\begin{itemize}
\item[(i)] $W^k \neq \emptyset$ if and only if $\rho^k(2;2,n) \geq
0$ if and only if $k < 3$. \item[(ii)] $W^1$ is a smooth rational
irreducible quasi-projective variety of the expected dimension
$5n-1$.  \item[(iii)] $W^2$ is a smooth rational irreducible
quasi-projective variety of the expected dimension $2n+7$.
\end{itemize}
\end{proposition}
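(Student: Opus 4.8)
The plan is to exploit the well-understood structure of the 't~Hooft component $MI_0(n)$, whose bundles arise from configurations of skew lines, and to translate the condition $h^0(E)\ge k$ into a concrete geometric description. Since $\rho^k(2;2,n)=\dim MI_0(n)-k(k-\chi(2;2,n))=(8n-11)-k(k-\chi)$, the first step is to compute $\chi(2;2,n)$ by Riemann--Roch on $\PP^3$; this pins down the threshold on $k$ explicitly. Once $\chi$ is known, the equivalence ``$\rho^k(2;2,n)\ge 0 \iff k<3$'' in part $(i)$ becomes a purely arithmetic check using the hypothesis $n>13$, which is precisely what forces the inequality $k(k-\chi)\le 8n-11$ to fail for $k\ge 3$ while holding for $k\le 2$. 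The substance of $(i)$ is then the \emph{non-emptiness} direction: one must produce, for $k=1$ and $k=2$, actual instanton bundles in $MI_0(n)$ with the prescribed number of sections, and conversely show that no bundle in $MI_0(n)$ can have $h^0(E)\ge 3$.

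For the non-emptiness and the finer statements $(ii)$ and $(iii)$, I would set up a parametrization of $W^1$ and $W^2$ by geometric data. A bundle $E\in MI_0(n)$ with $h^0(E)>0$ yields, from a nonzero section $s$, an exact sequence
\[
0 \rightarrow \cO_{\PP^3} \rightarrow E \rightarrow \cJ \rightarrow 0,
\]
where $\cJ$ is a twisted ideal sheaf of a codimension-two subscheme; the analysis is entirely analogous to the section-zero-locus arguments used in Claim~1 of the Example and in Claim~2 of the previous theorem. The key step is to identify the zero locus of the section with a curve (a disjoint union of lines, in the 't~Hooft picture) and to show that the family of such configurations is irreducible, rational, and smooth of the predicted dimension. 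For $W^1$ the relevant parameter space should be (birational to) a space of $n+1$ skew lines together with extension data, giving dimension $5n-1$; for $W^2$ one imposes a second independent section, cutting the count down to $2n+7$. I would compute each dimension by tracking the contributions of the line configuration, the choice of section(s), and the $\Ext^1$ governing the extension, exactly as the dimension count $\dim\cE_{\pm\xi_n}$ was carried out at the end of the change-of-polarization theorem.

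Smoothness of $W^1$ and $W^2$ would follow by showing that the determinantal description of Theorem~\ref{construccio} meets its expected codimension, i.e.\ that the relevant $\Ext^2$ obstruction (or the cokernel sheaf $R^1\nu_*\cU$ appearing in that construction) behaves generically; equivalently, one verifies that the tangent space to the Brill-Noether locus has the expected dimension at every point, so the scheme structure is reduced and the bound in Theorem~\ref{construccio} is attained. Rationality then comes for free from the explicit rational parametrization by line configurations and extension classes.

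The main obstacle I anticipate is the converse half of $(i)$: ruling out $h^0(E)\ge 3$ for \emph{every} bundle in the component. A section-counting heuristic via $\rho$ only bounds dimensions, not sections, so one genuinely needs a structural argument---presumably that a third independent section would force the vanishing locus to acquire too many lines relative to $c_2=n$, or would produce a destabilizing sub-line-bundle contradicting the instanton normalization $H^0(E(-1))=0$ together with stability. Establishing this incompatibility cleanly, rather than merely arithmetically, is where the real work lies; the dimension and smoothness claims in $(ii)$ and $(iii)$, by contrast, should reduce to the determinantal machinery already in hand.
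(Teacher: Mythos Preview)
Your outline matches the paper's strategy: compute $\chi(E)=-3n+11$ (the paper does this via the monad for $E(-1)$ rather than direct Riemann--Roch, but either works), check arithmetically that $\rho^k<0$ exactly for $k\ge 3$ when $n>13$, and then identify $W^1$ and $W^2$ with the 't~Hooft and special 't~Hooft loci in $MI_0(n)$.

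Where the paper diverges from your plan is that it does \emph{not} attempt to prove the hard parts from scratch. The bound $h^0(E)\le 2$ for every $E\in MI_0(n)$ is simply cited from B\"ohmer--Trautmann \cite{BG}; your proposed route via ``a third section would produce a destabilizing sub-line-bundle'' does not work as stated, since stability together with $H^0(E(-1))=0$ only forces any section of $E$ to vanish in codimension two, and places no a priori cap on $\dim H^0(E)$. The ``too many lines relative to $c_2$'' heuristic is closer in spirit to what is actually needed, but you have not turned it into an argument. Likewise, for (ii) and (iii) the paper invokes the known fact that the 't~Hooft bundles (those arising from $n+1$ skew lines) form a smooth irreducible rational variety of dimension $5n-1$, and that the \emph{special} 't~Hooft bundles---those whose $n+1$ lines all lie on a common smooth quadric---form one of dimension $2n+7$. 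You did not isolate the quadric condition that singles out $W^2$, and your smoothness argument conflates two different things: a determinantal locus meeting its expected codimension is \emph{not} automatically smooth, so one genuinely needs the tangent-space computation at every point (or the cited result), not merely the codimension check.

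In short, your plan has the right shape but defers the substance to lemmas you have not supplied; the paper defers the same substance to the literature.
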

\begin{proof} (i) Let us first compute $\rho^k(2;2,n)$. To this end, we take $E$ a
mathematical instanton bundle with Chern classes
$(c_1(E),c_2(E))=(2,n)$. It is well-known that $E(-1)$ is the
cohomology bundle of a monad of the following type
$$ 0 \longrightarrow {\cO}_{\PP^3}(-1)^{n-1} \mapright{\alpha
}{\cO}_{\PP^3}^{2n}  \mapright{\beta} {\cO}_{\PP^3}(1)^{n-1}
\longrightarrow 0.$$ Hence we have two exact sequences
$$ 0 \longrightarrow \cK=\ker (\beta)  \rightarrow {\cO}_{\PP^3}^{2n}  \mapright{\beta} {\cO}_{\PP^3}(1)^{n-1}
\longrightarrow 0,$$ and
$$ 0\longrightarrow {\cO}_{\PP^3}(-1)^{n-1} \mapright{\alpha
} \cK \rightarrow E(-1)  \longrightarrow 0$$ which allows us to
compute $\chi(E)$. Indeed,
\[\chi(E)= \chi(\cK(1))-(n-1)=2n\chi(\cO_{\PP^3}(1))-(n-1)\chi(\cO_{\PP^3}(2))-(n-1)=-3n+11. \]
Since $\dim MI_0(n)=8n-11$, we deduce that
\[\begin{array}{ll} \rho^k(2;2,n) & =\dim MI_0(n)-k(k- \chi(E)) \\ & =8n-11-k(k+3n-11) \\ &=n(8-3k)+k(11-k)-11.
\end{array} \] So, $\rho^1(2;2,n)=5n-1$, $ \rho^2(2;2,n)=2n+7$ and
$\rho^k(2;2,n)<0$ for $k \geq 3$ provided $n \geq 14$. By a result
in \cite{BG}, for any $E \in MI_0(n)$, $h^0(E) \leq 2$. On the
other hand, any instanton bundle $E$ associated to $n+1$ general
skew lines in $\PP^3$ satisfies $h^0(E)=1$ ('t Hooft bundles) and
any instanton bundle $E$ associated to $n+1$ skew lines on a
smooth quadric $Q$ in $\PP^3$ satisfies $h^0(E)=2$ (special 't
Hooft bundles). Putting altogether we have $W^k \neq \emptyset$ if
and only if $k \leq 2$, if and only if $\rho^k(2;2,n) >0$.

(ii) and (iii) The expected dimensions are $\rho^1(2;2,n)=5n-1$
and $ \rho^2(2;2,n)=2n+7$. Hence, the results follows from the
fact that $W^1$ (resp. $W^2$) is nothing but the variety of 't
Hooft bundles (resp. special ' t Hooft bundles) and we well-known
that the variety of 't Hooft bundles (resp. special ' t Hooft
bundles) is a smooth, irreducible and rational variety of
dimension $5n-1$ (resp. $2n+7$).

\end{proof}

\end{document}